\documentclass[12pt,reqno]{article}

\usepackage{csvsimple}
\usepackage{color}
\usepackage{graphicx}
\usepackage[position=b]{subcaption}
\usepackage{braket}
\usepackage{amsthm}
\usepackage{tcolorbox}
\usepackage{amsmath,amsfonts,amssymb}
\usepackage{multirow}
\usepackage{adjustbox}
\usepackage{stmaryrd}

\newcommand{\half}{\frac 12}

\newcommand{\diff}{\;\textnormal{d}}

\newcommand{\ds}{\;\textnormal{d}s}
\newcommand{\dx}{\;\textnormal{d}x}

\newcommand{\dth}{\;\textnormal{d}{\theta}}

\newcommand{\ltwonorm}[1]{\| #1 \|_{0, \Omega}}

\newcommand{\Rd}{{\mathbb{R}^d}}

\newcommand{\idop}{\textnormal{\textbf{id}}}



\newcommand{\inv}{^{-1}}



\newcommand{\intcirc}{{\int_{S^1}}}
\newcommand{\linftysd}[1]{\|#1\|_{\infty, S^1}}

\newcommand{\intpk}{\int_{\partial K}}
\newcommand{\oin}{{\Omega^-}}
\newcommand{\oout}{{\Omega^+}}
\newcommand{\oinh}{{\Omega_{h}^-}}
\newcommand{\oouth}{{\Omega_{h}^+}}
\newcommand{\bfalpha}{{\boldsymbol{\alpha}}}
\newcommand{\gradalpha}{\partial^\bfalpha}
\newcommand{\sumks}{\sum_{K\in\Omegah}}
\newcommand{\hnorm}[1]{\| #1 \|_\mdh}

\newcommand{\ltwoe}[1]{\| #1 \|_{0,e}}

\newcommand{\ltwoK}[1]{\| #1 \|_{0,K}}
\newcommand{\honeK}[1]{\| #1 \|_{1,K}}
\newcommand{\pones}{\mathbb{P}_{\femp}}
\newcommand{\iones}{\mathbb{I}_{\femp}}
\newcommand{\Kep}{{K_e^+}}
\newcommand{\Kem}{{K_e^-}}

\newcommand{\CR}{\Pi^c_h}
\newcommand{\CRi}{\big[\Pi^c_h\wsf\big]^i}
\newcommand{\femp}{\mathsf{S}_h}
\newcommand{\femq}{\mathsf{S}_h}
\newcommand{\pone}{\mathcal{P}^1(\Omega_h)}
\newcommand{\Omegah}{\Omega_h}

\newcommand{\poneK}{\mathbb{I}^{1,d}_{K}}

\newcommand{\mdh}{{m, \Omegah}}

\newcommand{\uth}{u_{h}}

\newcommand{\ltwonormh}[1]{\| #1 \|_{0, \Omegah}}

\newcommand{\ltwonorms}[1]{\| #1 \|_{0, S^1}}

\newcommand{\normmdh}[1]{\| #1 \|_{m,\Omegah}}

\newcommand{\femspace}{{\tilde{V}^d_{h}}}

\newcommand{\femspacetensor}{{\tilde{V}^{d\times d}_{h}}}

\newcommand{\meshskelint}{\mathring{\mathcal{E}}_{h}}
\newcommand{\gam}{{g\circ S^1}}
\newcommand{\meshskelng}{\meshskelint\setminus\gam}

\newcommand{\intep}{\int_e}
\newcommand{\jump}[1]{\llbracket #1 \rrbracket}
\newcommand{\avg}[1]{\{ #1 \}}

\newcommand{\vsf}{\mathsf{v}}
\newcommand{\wsf}{\mathsf{w}_h}

\newcommand{\Fspace}{L^2(S^1)^d}
\newcommand{\Jindex}{\mathcal{J}_g}

\newcounter{mypbm}
\newcommand{\defproblem}[2]{\refstepcounter{mypbm}\begin{tcolorbox} \textbf{Problem \arabic{mypbm} (#1)}\\
 #2 \end{tcolorbox}}
 
\newcounter{mytheorem}
\newtheorem{theorem}[mytheorem]{Theorem}

\newtheorem{definition}[mytheorem]{Definition}
\newtheorem{corollary}[mytheorem]{Corollary}
\newtheorem{remark}[mytheorem]{Remark}
\newtheorem{lemma}[mytheorem]{Lemma}
\newtheorem{proposition}[mytheorem]{Proposition}

\usepackage[
backend=biber,
style=alphabetic,
sorting=ynt
]{biblatex}

\addbibresource{ms.bib}

\begin{document}

\title{A note on error analysis for a nonconforming discretisation of the tri-Helmholtz equation with singular data}
\author{{\sc Andreas Bock and Colin Cotter}}
\maketitle

\begin{abstract}
{We apply the nonconforming discretisation of \cite{wu2019nonconforming} to the tri-Helmholtz equation on the plane where the source term is a functional evaluating the test function on a one-dimensional mesh-aligned embedded curve. We present error analysis for the convergence of the discretisation and linear convergence as a function of mesh size is recovered almost everywhere away from the embedded curve which aligns with classic regularity theory.}\\

{Elliptic PDE; Nonconforming method; Finite element method.}
\end{abstract}

\section{Introduction}\label{introduction}

We study the vector tri-Helmholtz equation on the unit square:
\begin{equation}\label{eq:trilap}
B u = f\quad \text{on}\quad \Omega:=[0,1]^d,
\end{equation}
where $d=2$ subject to homogeneous Dirichlet conditions on $\partial \Omega$, where $B$ is the linear differential operator defined as follows by the $L^2$ inner product (denoted by $\langle\cdot,\cdot\rangle_{0,\Omega}$), for some $b>0$ for smooth functions $u$ and $v$ and $m=3$, where $\idop$ represents the identity operator:
\begin{align}\label{eq:Djs}
\langle Bu, v\rangle_{0,\Omega} = \sum_{i=1}^d\int_{\Omega}
\langle (\idop - b\Delta)^{m} u^i, v^i\rangle \diff x,
\end{align}
where $\langle\cdot,\cdot\rangle$ is the standard Euclidean inner product. $f$ is a functional defined in the following way, where for the periodic domain $S^1 = [0,2\pi]$, $g\in \mathsf{C}^0(S^1, \Omega)$ is a continuous embedded planar curve:
\begin{equation}\label{eq:fdef}
v \mapsto f(v) := \intcirc\tilde{f}\cdot v\circ g\dth,
\end{equation}
where $\tilde{f}$ is some function in $\Fspace$. To simplify the analysis further we assert that the curve $g$ satisfies the following assumption:
\begin{equation}\label{distg}
\text{dist}(\partial \Omega, \text{supp}(g))=:\epsilon_g > 0.
\end{equation}
In this paper we apply the nonconforming finite element discretisation of \cite{wu2019nonconforming} to the components of $u$ in equation \eqref{eq:trilap} and study the convergence properties of this scheme for source terms $f$ as defined in equation \eqref{eq:fdef}. We denote by $\Omega_h$ a shape-regular, quasi-uniform triangulation of the domain $\Omega$ \cite[Definition 3.3.11]{brennerscott} and we shall assume that the curve $g$ is aligned with its mesh skeleton.\\

The finite element method \cite{brennerscott} is a ubiquitous method for discretising partial differential equations (PDEs) to which solutions are commonly sought after in piecewise polynomial spaces. Such finite element functions are identified by a set of basis coefficients and a polynomial basis. Determining what constitutes an adequate basis is a hugely important challenge. Generally speaking, the choice of the local degrees of freedom of a finite
element affects the properties of the global approximation space. For example,
when $X_h$ is constructed from linear polynomials i.e. for $K \in \Omega_h$,
$X_h|_K = \mathcal{P}^1(K)$ and in addition, functions in $X_h$ are linear
interpolates of the nodal values at the vertices of $K$, then it is easy to see
that $X_h \subset \mathsf{C}^0(\Omega)$. A nonconforming linear space also
exists where the local degrees of freedom are chosen at the midpoint of the
edges of $\partial K$ see e.g. the construction in \cite[Section
10.3]{brennerscott}. This results in a \emph{locally continuous}, \emph{globally discontinuous} finite element and $X_h\not\subset X$. While the continuity of the former can be advantageous in light of Céa's lemma \cite[Lemma 2.28]{ErnGuermond2013} for easy access to a convergence proof for the method, it is sometimes inconvenient or even computationally infeasible to
design conforming finite element spaces e.g. when the continuous space $X$ is some higher-order Sobolev space. Generally speaking, for a $2m$\textsuperscript{th}-order PDE a conforming space must be a $\mathsf{C}^{m-1}$-conforming subspace\footnote{The weak form
equation would require $m$ weak derivatives which implies the existence of $m-1$ continuous derivatives.}. This poses a challenge for the design and
implementation of finite element software. For instance, automation of code generation for $\mathsf{C}^1(\Omega)$-conforming finite elements was only very recently solved \cite{kirby2018general,kirby2019code}. Nonconforming FEMs are therefore attractive in developing numerical schemes for higher-order PDEs and - as we shall see - adequate convergence can be recovered in certain cases.
\subsection{Nonconforming Finite Elements}

For a triangulation $\Omega_h$ we define the
mesh resolution, or mesh size, as $h = \max_{K\in \Omega_h} h_K$,
where $h_K = \textnormal{diam}(K)$, see \cite{brennerscott,ciarlet2002finite}. As we will be studying
convergence of finite element methods we introduce the notion
of \emph{shape-regularity} \cite[Definition 1.107]{ErnGuermond2013}. A family of meshes $\{ \Omega_h \}_{h>0}$ is said to be shape-regular if there exists $c_0>0$ independent of $h$ such that
\begin{equation}\label{eq:shapereg}
\Omega_h \frac{h_K}{\rho_K} \leq c_0,\quad \forall \Omega_h \in \{ \Omega_h \}_{h>0},
\end{equation}
where $\rho_K$ is the diameter of the largest circle inscribed in $K$. We also
assume that such families are \emph{quasi-uniform} \cite[Definition
1.140]{ErnGuermond2013} i.e.  there exists a constant $c>0$ such that:
\[
\forall h,\;\forall K\in \Omega_h,\quad h_K\geq ch.
\]
This implies that $\epsilon_g$ from \eqref{distg} satisfies $h<\epsilon_g$ i.e. the discretisation of the domain is so that the curve is at least $h$ away from the boundary. Given a triangulation $\Omega_h$ we define its
interior facets (or edges, when $d=2$) $\mathring{\mathcal{E}}_h$ by
 $e\in \mathring{\mathcal{E}}_h$ if $\exists K, K' \in \Omega_h$ such
 that  $e = K\cap K'$. We denote by $\partial \Omega_h$
the edges of $\partial K$, $K\in \Omega_h$ that trace $\partial \Omega$, and we say that $\mathcal{E}_h = \mathring{\mathcal{E}}_h \cup \partial \Omega_h$ is the \emph{mesh skeleton} of $\Omega_h$. Sometimes we explicitly denote the dependence of the skeleton on its associated triangulation i.e. $\mathcal{E}_h(\Omega_h)$.  We also use the notation $\int_{\mathcal{E}_h} = \sum_{e\in\mathcal{E}_h}  \int_e$ with
similar interpretations for $\int_{\partial \Omega_h}$ and
$\int_{\mathring{\mathcal{E}}_h}$ and we have:
\[
\int_{\mathcal{E}_h} = \int_{\mathring{\mathcal{E}}_h}+\int_{\partial \Omega_h}\,.
\]

The space $\mathsf{C}^k(O, \Rd)$ denotes the space of continuous functions over
the domain $O$ with $k$ continuous derivatives taking value in $\Rd$.  When the
latter is omitted, the range is $\mathbb{R}$ e.g.  $\mathsf{C}^0(O)$ is the
space of continuous functions over $O$ taking values in $\mathbb{R}$. $L^p(O)$, $p\geq 1$
and integer denotes the usual Lebesgue spaces over $O$ with $L^\infty(O)$ being
the Banach space of essentially bounded functions on $O$. For $k\geq 0$ (integer
or fraction) we let $W^{k,p}(O)$ define the usual Sobolev spaces, see
\cite{evans}.  Note that $W^{k,\infty}(O)$ is the Banach space of scalar
$k$-Lipschitz continuous functions\footnote{We remark that the Lipschitz class
of functions does not always coincide with $W^{k,\infty}(O)$ depending on the
nature of $O$.  Given that we are always dealing with convex domains we
characterise the Lipschitz class with the Sobolev spaces $W^{k,\infty}(O)$ (see
\cite[Theorem 4.1]{heinonen2005lectures}) and mention this no further.} over the
convex polygonal Lipschitz domain $O$ taking values in $\mathbb{R}$. We will be
dealing with vector and tensor-valued functions as well, and therefore when
necessary use superscripts e.g. $L^\infty(O)^d$ to denote vector-valued
functions $\mathbb{R}^d \rightarrow\mathbb{R}^d$, or $W^{k,\infty}(O)^{d\times
d}$ denotes the space of tensor-valued $k$-Lipschitz functions on $O$. This
should be understood in the sense that each component considered as a scalar
function satisfies the regularity of the given space.  We use standard
multi-index notation i.e. a $d$-dimensional multi-index $\bfalpha$, with
$|\bfalpha|=\sum_{i=1}^d\alpha_i$, and we define $\gradalpha$, the partial
derivative with respect to $\bfalpha$ as $\gradalpha =
{\frac{\partial^{|\bfalpha|}}{\partial^{\alpha_1}_{1} \cdots \partial^{\alpha_d}_{d}}}$, where $\partial^{\alpha_i}_i := \partial^{\alpha_i} / \partial {x_i}^{\alpha_i}$ . In this paper, $\diff{x}$ - sometimes also $\diff{\hat x}$ - denotes the
$d$-dimensional Lebesgue measure \cite[Section 1.1]{brennerscott}, where $d$
will be clear from context.\\

We make a clear distinction in our notation depending on the
value of $p$. This is reflected in the following norms when $p=2$ or $p=\infty$, respectively:
\begin{subequations}\label{ournormz}
\begin{align}
& \| f \|_{k,O}^2 = \sum_{|\bfalpha|\leq k} \| \gradalpha f\|_{0,O}^2,\\
& \| f \|_{W^{k,\infty}(O)} = \sum_{|\bfalpha|\leq k} \| \gradalpha
f\|_{\infty,O},
\end{align}
\end{subequations}
where $\|\gradalpha f\|_{0,O}^2 = \int_O |\gradalpha f|^2 \dx$ is the usual
$L^2$ norm over $O$ and $\| \gradalpha f\|_{\infty,O} = \textnormal{ess}\,
\sup_O |\gradalpha f|$ is the essential supremum norm. Note in particular that
due to the presence of the Euclidean norm $|\cdot|$ here, the norms in
\eqref{ournormz} are well-defined for scalar, vector and tensor-valued
objects. When $p=2$ we also use the notation $H^k(O):=W^{k,p}(O)$ with $H_0^k(O)$ consisting of such functions that vanish on $\partial O$ in sense of traces. We also denote by $|\cdot|_{k, O}$ the usual semi-norm over this space. The inner product on $H^k(O)$ is denoted by $\langle \cdot,\cdot \rangle_{k,O}$.\\
\begin{definition}[Local interpolant {\cite[Lemma 4.4.1 and Theorem
4.4.4]{brennerscott}}]\label{def:localinterpolant}
For a convex simplex $K\subset\mathbb{R}^d$ and nonnegative integers $l$ and
$m$ we define the usual local interpolant $\mathcal{I}_K : \mathsf{C}^l(\bar{K})
\rightarrow H^m(K)$ satisfying the following estimate:
\[
\|\mathcal{I}_K u\|_{m,K} \lesssim \|u\|_{\mathsf{C}^l(\bar{K})}.
\]
When $m-l-d/2>0$ we have the following bound:
\begin{align*}
& |u - \mathcal{I}_K u|_{i,K} \lesssim h_K^{m-i} |u|_{m, K}, \quad 0\leq
i\leq m,
\end{align*}
with a constant that depends on $m$, $d$ and $K$.

\end{definition}
This extends trivially to vector or tensor-valued objects. We can therefore,
under the right circumstances, approximate smooth functions on $K$ by a
polynomial. For nonsmooth functions one must seek other interpolants since 
the nodal basis may not support point evaluation i.e. when the space
$\mathsf{C}^l(\bar{K})$ in definition \ref{def:localinterpolant} is replaced by
a weaker space like $L^2(\bar{K})$. Local averages are often taken as surrogates
for the nodal basis in this case, see e.g. the Cl\'ement
\cite{clement1975approximation}, or for interpolation operator satisfying
boundary conditions see Scott-Zhang \cite{scott1990finite}.  For more
information on this see the discussion in \cite[Section 4.8]{brennerscott}, and
\cite{dupont1980polynomial} for general polynomial approximation of Sobolev
spaces. A central task in finite element theory is the construction of the nodal basis and the basis functions to cater for convergent approximations of some infinite-dimensional problem. We shall therefore leave these undefined here and return to their construction when necessary in the coming chapters.\\

From definition \ref{def:localinterpolant} we can define a global interpolant $\mathcal{I}_h: X \rightarrow X_h$ by:
\begin{equation}\label{eq:globint}
\mathcal{I}_h v|_K = \mathcal{I}_K v, \qquad K \in O_h,
\end{equation}
where $O_h$ is a triangulation of $O \subset \mathbb R^d$ over which the spaces
$X$ and $X_h$ are defined. 
Global interpolation estimates require the notion of
a \emph{broken} Hilbert-Sobolev norm given by the following definitions:
\begin{align}
\|u\|_{k,O_h}^2 := \sum_{K\in O_h} \|u\|_{k, K}^2,\label{brokenmd_scalar}
\end{align}
with the associated broken inner product:
\begin{align*}
\langle u, v\rangle_{k,O_h} := \sum_{K\in O_h} \langle u, v\rangle_{k, K}.
\end{align*}
To study finite element methods we introduce the notion of \emph{approximability}:
\begin{lemma}[Approximability]\label{lemma:approx}
Suppose $X=H^{m}(O)$ where $O$ is a Lipschitz domain in $\mathbb R^d$ and
let $\mathcal{I}_h$ be the global interpolant from \eqref{eq:globint}. Then for
$u \in H^{m+s}(O)$:
\begin{subequations}\label{eq:lemma:approx}
\begin{align}
& \|u - \mathcal{I}_K u\|_{i,O_h} \lesssim h_K^{m+s-i} |u|_{m+s, O_h}, \quad 0\leq
i\leq m,\label{eq:lemma:approx:a}\\
& \lim_{h\rightarrow 0} \inf_{v_h\in X_h} \|v-v_h\|_{X_h} = 0,\quad \forall
v\in X,\label{eq:lemma:approx:b}
\end{align}
\end{subequations}
and we sometimes say $X_h$ \emph{approximates} $X$.\\
\end{lemma}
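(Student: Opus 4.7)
The plan is to treat the two claims separately, with (a) reduced to the elementwise bound in Definition \ref{def:localinterpolant} and (b) obtained from (a) by a standard density-plus-triangle-inequality argument.

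For \eqref{eq:lemma:approx:a}, I would fix an element $K\in O_h$ and invoke the local estimate of Definition \ref{def:localinterpolant} with the regularity index shifted from $m$ to $m+s$. Provided $u\in H^{m+s}(O)$ has enough regularity that the embedding $H^{m+s}(K)\hookrightarrow \mathsf{C}^l(\bar K)$ holds (i.e.\ $m+s-l-d/2>0$, exactly the hypothesis stated in Definition \ref{def:localinterpolant}), we get
\[
|u-\mathcal{I}_K u|_{i,K}^2 \lesssim h_K^{2(m+s-i)}|u|_{m+s,K}^2,
\]
with a constant depending on the reference element and on $m$, $s$, $d$, $i$ only. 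Shape-regularity and quasi-uniformity of the family $\{\Omega_h\}_{h>0}$ imply that this constant can be taken uniform over all elements, since each $K$ is the image of a single reference simplex under an affine map with bounded distortion. Summing over $K\in O_h$, using the definition \eqref{brokenmd_scalar} of the broken norm together with the observation that $|u|_{m+s,K}\le \|u\|_{m+s,K}$, and bounding each $h_K$ by $h$ (which is permissible since $s\ge 0$ and thus $m+s-i\ge 0$ in the range of interest) yields the claimed estimate.

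For \eqref{eq:lemma:approx:b}, I would fix $v\in X=H^m(O)$ and $\varepsilon>0$, and use that $\mathsf{C}^\infty(\bar O)$ is dense in $H^m(O)$ on the Lipschitz domain $O$ to pick $\tilde v\in \mathsf{C}^\infty(\bar O)$ with $\|v-\tilde v\|_{m,O}<\varepsilon/2$. The smooth function $\tilde v$ satisfies the regularity hypothesis of part (a) with $s$ chosen large enough that the Sobolev embedding is available, so part (a) applied to $\tilde v$ with $i=m$ gives
\[
\|\tilde v-\mathcal{I}_h\tilde v\|_{m,O_h} \lesssim h^{s}|\tilde v|_{m+s,O_h},
\]
which tends to zero as $h\to 0$. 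Splitting via the triangle inequality,
\[
\inf_{v_h\in X_h}\|v-v_h\|_{X_h} \le \|v-\mathcal{I}_h\tilde v\|_{X_h} \le \|v-\tilde v\|_{X_h}+\|\tilde v-\mathcal{I}_h\tilde v\|_{X_h},
\]
and choosing $h$ small enough to make the second term below $\varepsilon/2$ concludes the argument.

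The only genuine subtlety is making sure the broken norm on the left of \eqref{eq:lemma:approx:a} and the norm $\|\cdot\|_{X_h}$ in \eqref{eq:lemma:approx:b} are compatible with $\|\cdot\|_{m,O}$ when restricted to smooth functions, and that the Sobolev embedding used to invoke Definition \ref{def:localinterpolant} is consistent with the index $m+s$. Everything else, summation over $K$, bounding $h_K$ by $h$, and the density step, is routine. The main obstacle is therefore bookkeeping on the admissible range of $s$ and verifying that the constants in Definition \ref{def:localinterpolant} are mesh-uniform, both of which are guaranteed by shape-regularity and quasi-uniformity of $\{\Omega_h\}_{h>0}$.
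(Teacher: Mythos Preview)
Your proposal is correct and follows essentially the same route as the paper: part \eqref{eq:lemma:approx:a} is obtained by summing the local estimate of Definition~\ref{def:localinterpolant} over the elements, and part \eqref{eq:lemma:approx:b} is the standard density-plus-triangle-inequality argument (the paper phrases the latter as ``continuity of the interpolant and a standard density argument'' and points to \cite[Theorem 2.1]{wang2013minimal}). Your write-up is in fact considerably more detailed than the paper's one-line proof.
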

\begin{proof}
\eqref{eq:lemma:approx:a} follows from the definition of $\mathcal{I}_h$ and
definition \ref{def:localinterpolant}.  \eqref{eq:lemma:approx:b} follows from
continuity of the interpolant and a standard density argument; see e.g. the proof of \cite[Theorem 2.1]{wang2013minimal}.
\end{proof}

Convergence for nonconforming methods are usually established in norms such as \eqref{brokenmd_scalar} owing to their local nature.
Approximation and consistency conditions for nonconforming finite
elements can now be formulated using more suitable vocabulary, see \cite{stummel1979generalized} or \cite[Section 3.1]{wang2013minimal} for the following definition.

\begin{definition}[Consistent approximation]\label{def:consist}
A finite element space $X_h$ over a bounded Lipschitz domain $\Omega$ is said to be a \emph{consistent approximation} of $X=H^m(\Omega)$, $m\geq 1$, if and only if:
\begin{enumerate}
\item $\lim_{h\rightarrow 0} \inf_{v_h\in X_h} \|v-v_h\|_{X_h} = 0,\quad \forall
v\in X$ (approximation).
\item For any sequence $\{ v_h \}_{h>0}$ with $v_h \in X_{h}$ and
$h\rightarrow 0$ such that $\{ \partial_{h}^\bfalpha
v_{h}\}_{h>0}$ is $L^2(\Omega)$-weakly convergent to some $v^\bfalpha \in X$ for all 
$|\bfalpha| \leq m$, it holds that $v_0 \in X$ and $v^\bfalpha
=\partial^\bfalpha v_0$ for all $|\bfalpha| \leq m$ (consistency).
\end{enumerate}
\end{definition}
The first condition can be derived from \eqref{eq:lemma:approx:b} of lemma
\ref{lemma:approx} via a canonical interpolant. The second condition can be interpreted as a kind of weak compactness condition. In practice we employ so-called \emph{patch tests} which are sufficient conditions for a consistency condition but motivated by the fact that they are much simpler to prove and sometimes even verify by numerical computation. These often amount to verifying whether integral moments of derivatives vanish on the mesh skeleton - a property called a \emph{weak continuity}. Historically the first attempt at providing an automated solution to this issue was in engineering by Irons et al. \cite{bazeley1965triangular} in 1965 but proved to be neither
necessary nor sufficient by Stummel \cite{stummel1980limitations}. The
generalised patch test (GPT) of Stummel \cite{stummel1979generalized} states necessary and sufficient conditions leading to convergence of nonconforming elements. Generally speaking, the design of a finite element leads to some weak continuity of which the GPT does not make use in its original form. However, using the form of the GPT as stated in \cite[Equation 4.4]{wang2001necessity}, $X_h$ passes the GPT if and only if the following condition is satisfied:
\begin{equation}\label{eq:gpt-cond:BG}
\lim_{h\rightarrow 0} \sup_{\substack{v_h\in X_h \\ \|v_h\|_{X_h}\leq 1}} |
T_{\bfalpha, i} (\psi, v_h) | = 0,\quad |\mathbf{\bfalpha}|<m,\quad 1\leq i\leq d,
\quad \forall \psi \in \mathsf{C}^\infty(\bar{\Omega}),
\end{equation}
where:
\[
T_{\bfalpha, i} (\psi, v_h) = \sum_{K\in \Omega_h} \int_K \big[\psi \frac{\partial}{\partial
x_i} \partial^\bfalpha v_h + \frac{\partial\psi}{\partial x_i} \partial^\bfalpha v_h 
\big]\diff x\,.
\]
Equivalently, where $\eta_K$ is the outward normal of $K\in \Omega_h$:
\begin{equation}
T_{\bfalpha, i} (\psi, v_h) = \sum_{K\in \Omega_h} \int_{\partial K} \psi
\partial^\bfalpha v_h \eta_K^i \diff s\,.
\end{equation}
Alternatives were also proposed, see for instance F-E-M test
\cite{shi1987fem} and the IPT test \cite{hong1986compactness}.  A major step forward was taken in \cite{wang2001necessity} where a conjecture was proved saying that given suitable approximation properties (in spirit of lemma \ref{lemma:approx}) and weak continuity, a \emph{weak patch test} (WPT) can be designed which is extremely simple and convenient. We refer the reader to \cite{shi2002nonconforming} for an excellent overview of the nonconforming FEM literature for second and fourth order PDEs.\\

As mentioned, we shall use a finite element space from \cite{wu2019nonconforming} to discretise a weak formulation of \eqref{eq:trilap} and we briefly summarise its construction. First we recall some notation. For a triangle $K$, let $\mathcal{P}_i(K)$ denote the space of $i^{\text{th}}$ order polynomials on $K$. Let $q_K$ denote the \emph{bubble function} on $K$ which is a nonnegative cubic polynomial that is zero on $\partial K$; see e.g. \cite{ErnGuermond2013} for more details. In the following, $\frac{\partial }{\partial\nu_e}$ denotes a derivative in the direction of the outward normal $\nu_e$ on an edge $e$ in the mesh skeleton of a triangulation. We define the following shape function space on a triangle $K\in\Omegah$:
\[
\tilde P_K^{(3,2)} := \mathcal{P}_3(K) + q_K \mathcal{P}_1(K) + q_K^2
\mathcal{P}_1(K).
\]
By \cite[Lemma 4.1]{wu2019nonconforming}, the following degrees of freedom
determine a function $v\in\tilde P_K^{(3,2)}$:
\begin{subequations}
\begin{align}
& \frac{1}{|e|}\int_e \frac{\partial^2 v}{\partial\nu_e^2}\ds & \text{$e$ is an
edge of $\partial K$},\\
& \nabla v(a) & \text{$a$ is a vertex of $\partial K$},\\
& \frac{1}{|e|}\int_e \frac{\partial v}{\partial\nu_e}\ds & \text{$e$ is an
edge of $\partial K$},\\
& v(a) & \text{$a$ is a vertex of $\partial K$}.
\end{align}
\end{subequations}
We now define the scalar finite element space $\tilde V_h$ defined over
$\Omegah$ as consisting of all functions $v_h|_K \in \tilde P_K^{(3,2)}$,
$K\in\Omegah$, with the degrees of freedom above being equal to zero if either
the edge or vertex lies on the boundary of $\Omegah$ (see also \cite[Equation
4.8]{wu2019nonconforming} for the definition of $\tilde V_h$):
This element is locally to $K$ a seventh order polynomial. It is designed in such a way that makes it possible to describe convergent $H^3$-nonconforming FEM. Our vector-valued finite element space $\femspace$ is then the space whose components occupy the
scalar finite element space $\tilde V_h$. As shown in the following corollary, this is a continuous finite element space.
\begin{corollary}
$\tilde V_h\hookrightarrow\mathsf{C}^0(\Omega)$.
\end{corollary}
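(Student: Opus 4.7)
The plan is to show that any $v\in\tilde V_h$ has matching traces on every interior edge $e = K_1\cap K_2$; interior smoothness on each closed triangle is automatic since $v|_K$ is a polynomial. Fix such an $e$ with vertices $a_1,a_2$.

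The key observation is that both bubble blocks vanish on the edge: $q_{K_i}|_{\partial K_i}=0$ by construction, so $q_{K_i}\mathcal{P}_1(K_i)|_e = 0$ and $q_{K_i}^2\mathcal{P}_1(K_i)|_e = 0$. Consequently, $v|_{K_i}$ restricted to $e$ is the trace of the $\mathcal{P}_3(K_i)$ component alone, which, written in the edge parameter, is at most a univariate cubic. A univariate cubic on $e$ is uniquely determined by the four Hermite data $v(a_j)$ and $\partial_t v(a_j)$ for $j=1,2$, where $\partial_t$ denotes the tangential derivative along $e$. These four quantities are exactly degrees of freedom of $\tilde P_{K_i}^{(3,2)}$: the vertex values appear explicitly, and the tangential derivatives are components of the vertex gradients $\nabla v(a_j)$. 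Single-valuedness of these DOFs in the definition of $\tilde V_h$ forces $v|_{K_1}|_e = v|_{K_2}|_e$, so $v$ is continuous across $e$.

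Assembling the per-edge statement over every $e\in\meshskelint$ yields $v \in \mathsf{C}^0(\Omega)$, and the inclusion $\tilde V_h\hookrightarrow \mathsf{C}^0(\Omega)$ is automatically continuous because $\tilde V_h$ is finite dimensional. The only nontrivial step is the first one — that both bubble blocks vanish on $\partial K$ — but this is immediate from $q_K=0$ on $\partial K$. Notably, the argument uses neither the mean normal-derivative nor the mean second-normal-derivative degrees of freedom, consistent with the fact that only $\mathsf{C}^0$-conformity (and not $\mathsf{C}^1$ or $\mathsf{C}^2$) is being asserted here; those extra DOFs are precisely what governs the weak continuity needed for the $H^3$-nonconforming analysis further on.
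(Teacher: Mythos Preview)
Your proof is correct and follows essentially the same route as the paper: reduce to an interior edge, observe that the trace of a function in $\tilde V_h$ along that edge is a univariate cubic, and use that the four Hermite data (vertex values and tangential derivatives coming from the vertex gradients) are single-valued DOFs, hence determine the cubic uniquely from each side. Your version is in fact slightly more explicit than the paper's, since you spell out \emph{why} the trace is cubic (the bubble $q_K$ vanishes on $\partial K$, killing both enrichment blocks), whereas the paper simply asserts that the restrictions lie in $\mathcal{P}^3(\partial K)$ by appeal to the construction in \cite{wu2019nonconforming}.
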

\begin{proof}
Let $K$, $K'$ be two elements of $\Omegah$ that share an edge $e$. As shown in \cite{wu2019nonconforming}, there are 8 degrees of freedom on this edge. Let $v$ and $v'$ be the restriction of a function $w\in\tilde{V}_h$ to $\partial K$ and $\partial K'$, where these functions occupy $\mathcal{P}^3(\partial K)$ and $\mathcal{P}^3(\partial K')$, respectively. Values and derivatives at the vertices are continuous in $\tilde{V}_h$ in the sense that they are unique, so
$v$ and $v'$ are order 3 polynomials that agree on 4 values which is only
possible if $v=v'$ everywhere on $e$. Since $e$ was arbitrary, $\tilde{V}_h \hookrightarrow \mathsf{C}^0(\Omega)$. 
\end{proof}

Although the finite element space above is continuous, this does not extend to
higher-order derivatives. Nevertheless, the following central result
highlights its \emph{weak} continuity properties:
\begin{lemma}[Weak continuity {\cite[Lemma 4.2]{wu2019nonconforming}}]\label{lemma:wkcts}
Let $\Omegah$ denote the mesh and $e \in \meshskelint$ be an interior edge of a
triangle $K\in\Omegah$. For any component $u^i$, $i=1,2$ of $u\in\femspace$ and $K' \in
\Omegah$ such that $e\cap K'\neq\emptyset$,
\[
\int_e \partial^\bfalpha u^i|_K \diff{s} = \int_e \partial^\bfalpha
u^i|_{K'}\diff{s}, \quad |\bfalpha| = 1,2.
\]
When $e\in\partial\Omegah$,
\[
\int_e \partial^\bfalpha u^i|_K \diff{s} = 0.
\]
\end{lemma}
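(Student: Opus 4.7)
The plan is to reduce Cartesian derivatives $\partial^\bfalpha$ to directional derivatives along the edge $e$ and then match them, term-by-term, against the degrees of freedom (DOFs) of $\tilde V_h$ listed just before the statement. Fix a unit tangent $\tau$ and unit normal $\nu_e$ of $e$, with endpoints $a_1,a_2$. For $|\bfalpha|=1,2$ we can write $\partial^\bfalpha = \sum_{j+k=|\bfalpha|} c_{jk}\,\partial_\tau^j \partial_{\nu_e}^k$ with coefficients $c_{jk}$ depending only on the geometry of $e$ (not on $K$ or $K'$). It therefore suffices to establish
\[
\int_e \partial_\tau^j \partial_{\nu_e}^k u^i\big|_K \ds = \int_e \partial_\tau^j \partial_{\nu_e}^k u^i\big|_{K'} \ds
\]
for each admissible pair $(j,k)$ separately, and (for $e\subset\partial\Omegah$) that each such integral vanishes.

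The purely normal terms are immediate: by construction of $\tilde P_K^{(3,2)}$, the quantities $\tfrac{1}{|e|}\int_e \partial_{\nu_e} u^i|_K \ds$ and $\tfrac{1}{|e|}\int_e \partial_{\nu_e}^2 u^i|_K \ds$ are edge-based DOFs, hence single-valued across $e$ and zero on boundary edges. For the remaining terms I would use that the boundary trace of $\tilde P_K^{(3,2)}$ lies in $\mathcal{P}^3(e)$ (because the bubble $q_K$ vanishes on $\partial K$), so that $u^i|_K$ restricted to $e$ is smooth. The fundamental theorem of calculus then gives
\begin{align*}
\int_e \partial_\tau u^i|_K \ds &= u^i(a_2)-u^i(a_1),\\
\int_e \partial_\tau^2 u^i|_K \ds &= \tau\cdot\bigl(\nabla u^i(a_2)-\nabla u^i(a_1)\bigr),\\
\int_e \partial_\tau\partial_{\nu_e} u^i|_K \ds &= \nu_e\cdot\bigl(\nabla u^i(a_2)-\nabla u^i(a_1)\bigr).
\end{align*}
Each right-hand side involves only the vertex DOFs $v(a)$ and $\nabla v(a)$, which are single-valued in $\tilde V_h$ and vanish at boundary vertices. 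Combining with the linear decomposition above closes both parts of the statement.

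The main subtle point I anticipate is the mixed second-order integral $\int_e \partial_\tau\partial_{\nu_e}u^i|_K\ds$: the globally defined normal derivative is not continuous across $e$, so one must be careful to interpret $\partial_{\nu_e}u^i|_K$ as the polynomial obtained by restricting from the $K$-side only, apply FTC there, and then separately on the $K'$-side, before matching the resulting vertex data via the DOFs. Once this is handled consistently, the decomposition and the identification of terms with DOFs are routine, and the boundary case follows without modification since the relevant DOFs are set to zero in the definition of $\tilde V_h$.
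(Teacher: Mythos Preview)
The paper does not prove this lemma; it is quoted verbatim from \cite[Lemma 4.2]{wu2019nonconforming} and used as a black box. So there is no in-paper argument to compare against.

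Your proposed argument is correct and is precisely the standard way this is shown in the cited reference. The decomposition of $\partial^\bfalpha$ into $\partial_\tau^j\partial_{\nu_e}^k$ with edge-dependent (but side-independent) coefficients reduces the claim to five scalar identities, and you correctly match each of them to the degrees of freedom: the two purely normal moments $\int_e\partial_{\nu_e}v\ds$ and $\int_e\partial_{\nu_e}^2v\ds$ are explicit DOFs, while the three integrals involving a tangential derivative collapse via the fundamental theorem of calculus to the vertex DOFs $v(a)$ and $\nabla v(a)$. Your handling of the mixed term $\int_e\partial_\tau\partial_{\nu_e}u^i|_K\ds$ is also right: since $u^i|_K$ is a polynomial on $K$, its normal derivative restricted to $e$ is smooth, FTC applies sidewise, and the endpoint values $\nu_e\cdot\nabla u^i|_K(a_j)$ coincide with $\nu_e\cdot\nabla u^i|_{K'}(a_j)$ because $\nabla v(a)$ is a shared vertex DOF. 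The boundary case follows immediately from the homogeneous DOFs in the definition of $\tilde V_h$.

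One minor remark: your claim that the trace on $e$ lies in $\mathcal{P}^3(e)$ is used only to justify smoothness along $e$, but you do not actually need the degree bound---polynomiality of $u^i|_K$ on $K$ already guarantees that all restricted derivatives are smooth on $e$, which is enough for FTC.
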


\begin{remark}
This importance of this result cannot be understated. Weak continuity is the
exact reason why we will be able to talk about convergence for nonconforming
methods. A na\"ive piecewise $H^3$ finite element space constructed from
piecewise cubic polynomials functions $\mathcal{P}^3(K)$, $K\in\Omegah$ does
not lead to the same results.
\end{remark}

Recalling $m=3$, we equip $\femspace$ with the broken $H^m$ norm $\normmdh{\cdot}$ dominating all derivatives up to third order. We shall also define a matrix-valued finite element space $\femspacetensor$ whose components occupy $\tilde V_h$, see the aforementioned reference.\\

We observe the following standard result.
\begin{theorem}\label{thm:H3Lip}
Let $O$ be a convex bounded Lipschitz domain in $\mathbb R^d$ with polygonal
boundary and $O_h$ a triangulation thereof satisfying the regularity
requirements introduced above. Suppose further that $u\in
\mathsf{C}(\bar{O})^d$, $u|_K \in H^3(K)^d$ for $K\in O_h$. Then $u\in
W^{1,\infty}(O)^d$.
\end{theorem}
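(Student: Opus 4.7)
The plan is to show that $u$ is piecewise $W^{1,\infty}$ on each simplex by a Sobolev embedding, and then glue these estimates together using the global continuity of $u$ to deduce that the weak gradient on all of $O$ exists and is essentially bounded.

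First I would apply the Sobolev embedding element by element. Since $3 - d/2 > 1$ for $d\leq 3$ (the dimensions of interest here, in particular $d=2$ for the paper), one has $H^3(K)\hookrightarrow\mathsf{C}^1(\bar K)\hookrightarrow W^{1,\infty}(K)$ on each simplex $K\in O_h$. This yields, for each $K$, a finite constant $C_K$ with $\lipnorm{u|_K}\lesssim C_K\|u\|_{3,K}$. Since the mesh is finite, taking the maximum of these bounds over all $K\in O_h$ shows that the piecewise gradient $\nabla_h u$, defined almost everywhere on $O$ by $(\nabla_h u)|_K = \nabla(u|_K)$, lies in $L^\infty(O)^{d\times d}$.

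The next step is to identify this piecewise gradient with the distributional gradient of $u$ on the whole of $O$. For any test function $\phi\in\cinfc$ and any component $u^i$ of $u$, I would split the integration elementwise and integrate by parts:
\[
\int_O u^i\, \pp{\phi}{x_j}\dx = \sum_{K\in O_h}\int_K u^i\,\pp{\phi}{x_j}\dx = -\sum_{K\in O_h}\int_K \pp{u^i}{x_j}\,\phi\dx + \sum_{K\in O_h}\int_{\partial K} u^i\, \phi\, \eta_K^j\diff s.
\]
Because $\phi$ has compact support, only interior edges contribute to the boundary terms. Across each interior edge $e = K\cap K'$, the traces of $u^i|_K$ and $u^i|_{K'}$ agree (by the assumption $u\in\mathsf{C}(\bar O)^d$), while the outward normals have opposite signs, so all these interface integrals cancel pairwise. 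This identifies the distributional derivative of $u^i$ with the piecewise derivative, which by the previous step lies in $L^\infty(O)$.

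The main (minor) obstacle is purely bookkeeping: one must check that the Sobolev-embedding constant on each $K$ is finite (it is, by shape-regularity and a scaling to the reference element, although uniformity in $h$ is not needed here since $O_h$ is a fixed finite triangulation). With both the weak-gradient identification and the elementwise $L^\infty$-bound in hand, $u$ lies in $W^{1,\infty}(O)^d$, completing the proof. Convexity of $O$ is not strictly required for the argument I have in mind, although it guarantees the usual identification $W^{1,\infty}(O) = \mathrm{Lip}(O)$ and thus makes the conclusion interpretable as global Lipschitz continuity of $u$.
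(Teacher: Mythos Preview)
Your proof is correct and follows the same overall strategy as the paper: apply the Sobolev embedding $H^3(K)\hookrightarrow \mathsf{C}^1_B(K)$ on each simplex, then use the global continuity of $u$ to pass from a piecewise bound to a global $W^{1,\infty}$ conclusion.

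The only difference is in the gluing step. The paper's argument is terse: it observes that $u$ is continuous on $\bar O$ with first derivatives almost everywhere bounded, and concludes directly that $u$ is Lipschitz (implicitly using convexity of $O$ and the identification of Lipschitz functions with $W^{1,\infty}$ on such domains, cf.\ the footnote earlier in the paper). You instead carry out an explicit integration-by-parts computation to identify the distributional gradient with the piecewise gradient, after which the $L^\infty$ bound gives $u\in W^{1,\infty}(O)^d$ directly. Your route is slightly more self-contained---it yields the $W^{1,\infty}$ conclusion without invoking convexity, which as you correctly note is only needed afterwards if one wishes to interpret the result as genuine Lipschitz continuity. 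Both arguments are standard and equally valid here.
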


\begin{proof}
The embedding theorem for homogeneous Sobolev spaces (i.e. with zero traces)
into the space $\mathsf{C}^j(\bar{O})$ is well-known. However, since the trace
$u|_K$ of $u$ on $\partial K$, $K\in O_h$ may not be zero we appeal to a
slightly different albeit standard result. By \cite[Theorem
5.4]{adams1975sobolev}, however, $H^m(K)\hookrightarrow\mathsf{C}_B^1(K)$,
where:
\[
\mathsf{C}_B^1(K) = \{ u \in \mathsf{C}^1(K)\;|\; D^\bfalpha u\textnormal{ is
bounded on } K,\; |\bfalpha|\leq 1\}. 
\]
This means any $H^m(K)$ function has a continuous representative with almost
everywhere bounded first derivatives on $K$. Since $u\in\mathsf{C}^0(\bar{O})$,
$u$ is a continuous function with its first derivative a.e. bounded, implying a
Lipschitz condition.
\end{proof}

\section{Tri-Helmholtz equation with singular data}\label{sec:trilap}

We define the bilinear form:
\begin{equation}\label{eq:ctsbilin}
a(u, v) =\sum_{i=1}^d\int_{\Omega} \sum_{j=0}^m b_j \binom{m}{j}\langle D^j u^i,
D^j v^i\rangle \diff x,
\end{equation}
where $D^0 = \idop$, and
\[
D^j =\begin{cases} \nabla D^{j-1} & j\text{ is odd},\\
\nabla\cdot D^{j-1} & j\text{ is even}, \end{cases}
\]
and $b_j = b^j$. We then see that by integrating by parts in \eqref{eq:trilap} we have the equivalence:
\[
a(u, v) = \langle Bu,v\rangle_{0,\Omega},
\]
for sufficiently smooth functions $u$ and $v$ with vanishing
$j$\textsuperscript{th} order derivatives on $\partial\Omega$, for
$j=0,\ldots,m-1$. In the rest of this paper the embedding $g$ will be a continuous piecewise linear map from $S^1$ into the plane which is aligned with the mesh skeleton similar to what is shown in figure \ref{fig:g}. Equipped with \eqref{eq:ctsbilin} we can pose the following weak version of \eqref{eq:trilap} where we write the 6\textsuperscript{th} order problem into a mixed system of two third order problems:

\begin{figure}
\centering
\includegraphics[width=.5\textwidth]{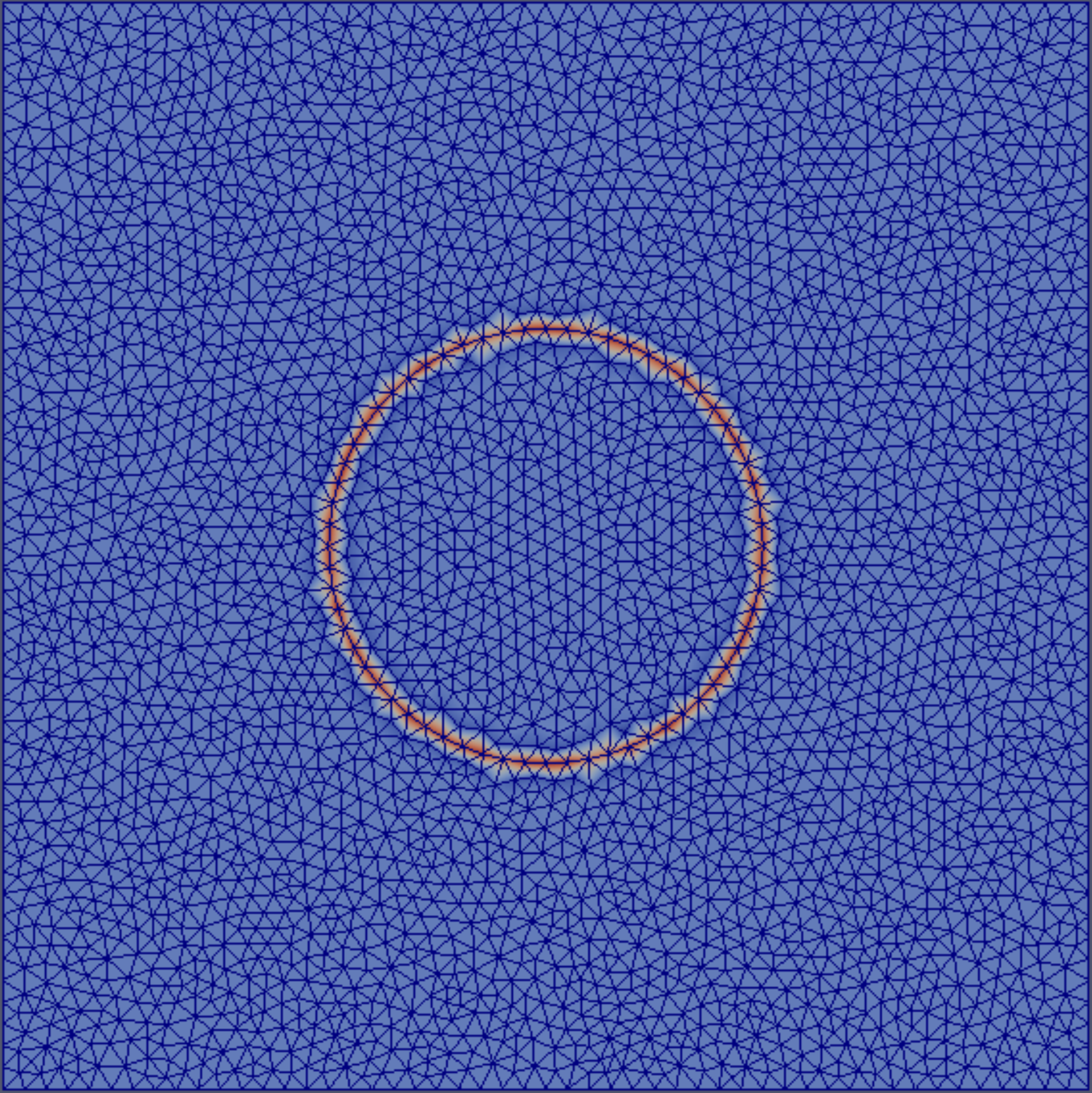}
\caption{A triangulation of the unit square with the curve $g\circ S^1$ highlighted to show its alignment with the mesh skeleton.}
\label{fig:g}
\end{figure}

\defproblem{Weak tri-Helmholtz}{\label{weaktri}
Find $u\in H_0^m(\Omega)^d$ such that:
\begin{equation}
    a(u, v) = \int_{S^1} \tilde f\cdot v\circ g \dth, \qquad  \forall v \in H^3(\Omega)^d,\label{Ham:cont}
\end{equation}
where $\tilde f \in \Fspace$.
}
\begin{remark}[On global regularity]
\label{rmk:reg} Recall that $d=2$ and that
$g\in\mathsf{C}^0(S^1)^d$ forms an embedded curve $G=g\circ S^1$ in a
sufficiently large bounded domain $\Omega$ (away from the boundary
$\partial\Omega$), and let $\oin$, $\oout$ denote the
inside and outside of the curve. Suppose further that $g$ is such that we can
define trace operators $\gamma_G^-: \oin\rightarrow H^{1/2}(G)$ and
$\gamma_G^+: \oout\rightarrow H^{1/2}(G)$. A functional of the form:
\[
H^1(\Omega)\ni w \mapsto \int_{S^1} \gamma_G^-[w]\circ g\dth,
\]
occupies $H^{-1}(\Omega)$, and, \emph{a fortiori}, in
$H^{-3}(\Omega)$. It is of course equivalent to use the trace $\gamma_G^+$ in
the example above.  As such, these cannot be represented as inner products in
$L^2(\Omega)$. The $H^{-1}(\Omega)$ regularity prohibits \emph{global}
higher-order elliptic regularity beyond $H^3(\Omega)$ of the solution $w$ to
\eqref{Ham:cont}. We refer to
\cite{grisvard1992singularities,kondrat1967boundary} for a priori regularity
estimates for general elliptic equations.
\end{remark}

We write the broken analogue $a_h$ of \eqref{eq:ctsbilin}:
\begin{equation}\label{def:a_h}
a_h(u, v) =\sum_{i=1}^d\sum_{K\in \Omegah}\int_K \sum_{j=0}^m b_j \binom{m}{j}\langle D^j u^i,
D^j v^i\rangle \diff x,
\end{equation}
This defines the following \emph{energy norm} as well:
\[
\hnorm{\cdot}^2=a_h(\cdot,\cdot)\,,
\]
and we observe the following norm equivalences based on boundedness of $b$:
\[
\sqrt{a_h(\cdot,\cdot)}\simeq\|\cdot\|_{m, \Omegah}, \qquad \sqrt{a(\cdot,
\cdot)} \simeq \|\cdot\|_{m, \Omega}\,.
\]

\defproblem{Discrete Weak tri-Helmholtz}{\label{weaktri:disc}

Find $u_h\in\femspace$ such that:
\begin{equation}\label{Ham:disc}
 a(u_h, v_h) = \int_{S^1} \tilde f\cdot v_h\circ g \dth,\qquad  \forall v_h \in\femspace
\end{equation}
where $\tilde f \in \Fspace$.
}

\begin{corollary}\label{cor:wp1}
Provided the right-hand sides of the problems  \eqref{Ham:cont} and \eqref{Ham:disc} are well-defined, there exist unique solutions
$u\in H^3(\Omega)^d$ and $u_h\in\femspace$ to these
equations, respectively.
\end{corollary}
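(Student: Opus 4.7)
The plan is to invoke the Lax--Milgram theorem in both settings, verifying continuity and coercivity of the bilinear form together with boundedness of the right-hand-side functional. For the continuous problem I would apply it on the symmetric pair $H_0^m(\Omega)^d\times H_0^m(\Omega)^d$ (testing against the full space $H^3(\Omega)^d$ adds no further constraint once one uses that $u\in H_0^m(\Omega)^d$ has vanishing traces up to order $m-1$); in the discrete case trial and test spaces coincide on $\femspace$.

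Boundedness of the right-hand side is the first thing to check. Since $d=2$ and $3>d/2$, the Sobolev embedding gives $H^3(\Omega)\hookrightarrow \mathsf{C}^0(\bar\Omega)$, so for $v\in H^3(\Omega)^d$ the composition $v\circ g\in \mathsf{C}^0(S^1)^d$ is well-defined pointwise and obeys $\|v\circ g\|_{0,S^1}\lesssim \|v\|_{\infty,\Omega}\lesssim \normmd{v}$. Cauchy--Schwarz on $S^1$ then produces
\[
\Big|\int_{S^1}\tilde f\cdot v\circ g\dth\Big|\lesssim \|\tilde f\|_{0,S^1}\,\normmd{v}.
\]
In the discrete setting, the corollary giving $\tilde V_h\hookrightarrow \mathsf{C}^0(\Omega)$ together with Theorem \ref{thm:H3Lip} (applied componentwise) provides pointwise meaning for $v_h\circ g$ with $\|v_h\|_{\infty,\Omega}\lesssim \hnorm{v_h}$, yielding the analogous bound against $\hnorm{\cdot}$.

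Continuity and coercivity of $a$ on $H_0^m(\Omega)^d$ follow from the norm equivalence $\sqrt{a(\cdot,\cdot)}\simeq\normmd{\cdot}$ recorded just after \eqref{def:a_h}, combined with the classical Poincaré--Friedrichs inequality for $H_0^m$; in the discrete case the analogous role is played by $\sqrt{a_h(\cdot,\cdot)}\simeq\normmdh{\cdot}$ on $\femspace$, provided one verifies that $\hnorm{\cdot}$ is a genuine norm (rather than merely a seminorm) on the nonconforming space. With these ingredients in hand, Lax--Milgram yields the unique $u\in H_0^m(\Omega)^d\subset H^3(\Omega)^d$ and the unique $u_h\in\femspace$.

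The main obstacle I foresee is the discrete coercivity statement: establishing nondegeneracy of $\hnorm{\cdot}$ on $\femspace$ amounts to a broken Poincaré--Friedrichs inequality that must exploit both the zero boundary degrees of freedom encoded in the definition of $\femspace$ and the weak continuity of normal derivative moments across interior edges from lemma \ref{lemma:wkcts}; without such a discrete inequality, coercivity fails and Lax--Milgram cannot be applied.
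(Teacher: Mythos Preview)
Your Lax--Milgram approach is correct and matches the paper's one-line proof, but you have manufactured an obstacle that is not there. The bilinear form \eqref{def:a_h} contains the $j=0$ summand with coefficient $b_0\binom{m}{0}=1$, so
\[
a_h(v_h,v_h)=\sum_{i=1}^d\sum_{K\in\Omegah}\sum_{j=0}^m b_j\binom{m}{j}\|D^j v_h^i\|_{0,K}^2
\]
already dominates $\|v_h\|_{0,\Omega}^2$ and, with all weights $b_j\binom{m}{j}>0$, is directly equivalent to the full broken norm $\normmdh{\cdot}$. Hence $\hnorm{\cdot}$ is automatically a genuine norm on $\femspace$ and coercivity is immediate; no broken Poincar\'e--Friedrichs inequality, weak continuity, or boundary degrees of freedom are needed. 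The same remark applies in the continuous case: the norm equivalence $\sqrt{a(\cdot,\cdot)}\simeq\normmd{\cdot}$ that you cite is coercivity, and invoking Poincar\'e--Friedrichs on top of it is redundant. This is exactly what the paper means by ``the continuous bilinear forms $a$ and $a_h$ control all $j^{\text{th}}$ order terms for $j=0,\ldots,3$''. Your discussion of right-hand-side boundedness is fine but lies outside the corollary, which takes well-definedness as a hypothesis.
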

\begin{proof}
The proof is trivial since the continuous bilinear forms $a$ and $a_h$ control all $j^\text{th}$ order terms for $j=0,\ldots, 3$ as is
therefore naturally coercive on their respective spaces.
\end{proof}

We remark that a Poincar\'e-type lemma holds for functions in $\tilde
V_h$:
\begin{lemma}[Poincar\'e {\cite[Lemma 4.3]{wu2019nonconforming}}]
It holds that:
\begin{align}\label{lemma43}
\|v_h\|_{m, \Omegah} \lesssim \|v_h\|_{0,\Omegah} + |v_h|_{m, \Omegah}, \quad
\forall v_h \in \tilde V_h.
\end{align}
\end{lemma}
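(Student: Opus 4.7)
My plan is to prove this by a Peetre--Tartar style compactness argument that hinges on the weak continuity of $\tilde V_h$ established in Lemma \ref{lemma:wkcts}. Arguing by contradiction, I would suppose there exist $h_k \to 0$ and $v_{h_k} \in \tilde V_{h_k}$ such that $\|v_{h_k}\|_{m,\Omega_{h_k}} = 1$ while $\|v_{h_k}\|_{0,\Omega_{h_k}} + |v_{h_k}|_{m,\Omega_{h_k}} \to 0$. Since each broken derivative $\partial_{h_k}^{\bfalpha} v_{h_k}$, $|\bfalpha| \leq m$, is then bounded in $L^2(\Omega)$, weak compactness furnishes (on a subsequence, not relabelled) functions $v^{\bfalpha} \in L^2(\Omega)$ with $\partial_{h_k}^{\bfalpha} v_{h_k} \rightharpoonup v^{\bfalpha}$ weakly.

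Next I would invoke the consistency condition from Definition \ref{def:consist}: Lemma \ref{lemma:wkcts} certifies that $\tilde V_h$ is a consistent approximation of $H^m(\Omega)$, so the weak limit $v_0 := v^{(0,0)}$ lies in $H^m(\Omega)$ and $v^{\bfalpha} = \partial^{\bfalpha} v_0$ for all $|\bfalpha| \leq m$. Since $\|v_{h_k}\|_{0,\Omega_{h_k}} \to 0$ forces $v_0 = 0$, each weak limit $v^{\bfalpha}$ vanishes identically. To extract a contradiction I need strong convergence of the intermediate-order derivatives in $L^2$, not merely weak: this is the main obstacle, since weak convergence to zero is compatible with the intermediate derivatives carrying all the mass in $\|v_{h_k}\|_{m,\Omega_{h_k}}^2 = 1$.

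To upgrade weak to strong convergence, I would build a conforming enrichment $\CR : \tilde V_h \to V_h^{\mathrm{c}} \subset H^m(\Omega)$ by averaging the nodal degrees of freedom shared across edges and vertices (the standard companion construction for this family). The weak continuity provided by Lemma \ref{lemma:wkcts} is precisely what is needed to secure the approximation estimate
\[
\|v_h - \CR v_h\|_{j,\Omega_h} \lesssim h^{m-j}\, |v_h|_{m,\Omega_h}, \qquad 0 \le j \le m-1,
\]
because the jumps of $\partial^{\bfalpha} v_h$ across interior facets have vanishing integral means for $|\bfalpha| \le 2$. Boundedness of $\{\CR v_{h_k}\}$ in $H^m(\Omega)$ combined with Rellich--Kondrachov compactness then gives strong $H^{m-1}(\Omega)$ convergence of a subsequence, and the approximation bound above transfers this to $|v_{h_k}|_{j,\Omega_{h_k}} \to 0$ for each $1 \le j \le m-1$. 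Together with the hypothesised decay of the $L^2$ and top-order seminorm pieces, this contradicts $\|v_{h_k}\|_{m,\Omega_{h_k}} = 1$ and closes the argument.

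As a fallback, since the result is already stated as Lemma 4.3 of \cite{wu2019nonconforming}, I could simply quote it and avoid reproducing the enrichment construction; conceptually, however, the essential ingredient in either route is the weak continuity of Lemma \ref{lemma:wkcts}, which is exactly what distinguishes this estimate from the naive piecewise $H^m$ inequality that would fail because arbitrary constant shifts on each element are cheap in the broken seminorm but not in the full broken norm.
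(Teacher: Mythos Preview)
The paper does not supply its own proof of this lemma: it is stated with an explicit citation to \cite[Lemma~4.3]{wu2019nonconforming} and left at that. Your fallback---simply quoting the reference---is therefore exactly what the paper does.

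Your main argument goes well beyond the paper and is essentially correct, but it is more elaborate than necessary. Two remarks. First, the detour through the consistency condition of Definition~\ref{def:consist} and weak $L^2$ limits is redundant once you invoke the conforming relative $\CR$: the enrichment estimate $|v_h-\CR v_h|_{j,\Omega_h}\lesssim h^{m-j}|v_h|_{m,\Omega_h}$ together with the \emph{classical} inequality $\|w\|_{m,\Omega}\lesssim\|w\|_{0,\Omega}+|w|_{m,\Omega}$ for $w=\CR v_h\in H^m(\Omega)$ (Ehrling/Gagliardo--Nirenberg on a bounded Lipschitz domain) already controls every intermediate broken seminorm directly, and the contradiction framework can be dropped entirely. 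Second, and relatedly, your hypothesis $h_k\to 0$ is not needed: the enrichment bound only uses $h_k$ bounded above, so the same chain of inequalities disposes of the case where the offending sequence of mesh sizes stays away from zero. None of this is a genuine gap---your argument closes---but the streamlined version avoids invoking weak compactness and the GPT machinery, which in the paper's logical order appear \emph{after} this lemma.
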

Using \eqref{lemma43} for each component $i$ of some $v_h \in \femspace$ we
obtain:
\begin{align*}
\| v_h^i \|_{m,\Omegah}^2 \lesssim \ltwonorm{v_h^i}^2 + |v_h^i|_{m,\Omegah}^2,
\end{align*}
so summing over the components $i$ we get:
\begin{align*}
\| v_h \|_{m,\Omegah}^2 \lesssim \langle v_h, v_h\rangle_{0, \Omegah} + \langle v_h, v_h\rangle_{m, \Omegah} \quad\forall v_h\in \femspace\,.
\end{align*}
and as such the results and analysis we present here are equally valid had we selected e.g. $B=\idop-\Delta^m$.\\

We introduce some additional notation:
\begin{itemize}
\item $\Omegah^+$ (resp. $\Omegah^-$) denotes the collection of
$K\in\Omegah$ lying on the outside (resp. inside) of the curve $g \circ S^1$.
\item We let $\mathcal{K}^+$ (resp. $\mathcal{K}^-$) denote the collection of
$K\in\Omegah^+$ (resp. $K\in\Omegah^-$) such that $K \cap g\circ S^1
\neq\emptyset$.
\item For $e\in\mathcal{E}_h$, $\mathcal{K}_e$ denotes the collection of
$K\in\Omegah$ such that $K \cap e\neq\emptyset$.
\item For any piecewise affine function $g\in\mathsf{C}^0(S^1)^d$ we define the finite index set $\Jindex$ such that for each $i\in\Jindex$, $g$ can be written as a linear function on some connected $S^1_i$ such that $S^1_i\subset S^1$ and $\cup_{i\in\Jindex} S_i^1 = S^1$ and for any $i, j\in\Jindex$ we have $S^1_i\cap S^1_j = \emptyset$ unless $i=j$.
\item 
For an edge $e$ on $\mathring{\mathcal{E}}_h$ between two triangles $K^-$ and $K^+$ we let the jump $\jump{\cdot}$ and average $\avg{\cdot}$ of a function across $e$ be defined by:
\begin{align*}
\jump{v} & =
\begin{cases}
v^+ \eta^+ +  v^- \eta^- \qquad \quad \, \;\textnormal{if $v$ is scalar,}\\
v^+ \cdot\eta^+ +  v^- \cdot\eta^- \qquad \textnormal{if $v$ is vector-valued,}
\end{cases}\\
\avg{v} & = \half (v^+ + v^-),
\end{align*}
where $v^+$ and $v^-$ are the restrictions of $v$ to $K^-$ and $K^+$, respectively.
On an edge $e\in\partial\Omega_h$, $\jump{v} = \eta v$ and $\avg{v} = v$ where $\eta$ is the
outward normal.
\end{itemize}

We recall the Strang lemma \cite{strang1972variational} lemma (see also \cite[Section 2.2.3]{boffi2013mixed}). In the present context we can state it as follows:
\begin{lemma}[Strang]\label{strang}
Let $u$ and $u_h$ be the solutions of \eqref{Ham:cont} and \eqref{Ham:disc}, respectively. Then,
\begin{equation}\label{eq:strang}
\hnorm{u_h- u} \lesssim \inf_{\mathsf{v}_h \in\femspace} \|u-\vsf_h\|_{m, \Omegah}
+ \sup_{\vsf_h\in \femspace} \frac{|\int_{S^1} \tilde f
\cdot \vsf_h \circ g\dth
- a_h(u, \vsf_h)|}{\normmdh{\vsf_h}}\,,
\end{equation}
where the first term on the right-hand side is called the \emph{approximation}
term and the second a \emph{consistency} term.\\
\end{lemma}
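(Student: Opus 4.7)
The plan is the standard Strang argument adapted to this nonconforming setting, splitting $u_h - u$ via an intermediate $\vsf_h\in\femspace$ and controlling the discrete piece through coercivity of $a_h$ on $\femspace$.

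Fix an arbitrary $\vsf_h\in\femspace$ and write $u_h - u = (u_h - \vsf_h) + (\vsf_h - u)$. The triangle inequality together with the norm equivalence $\hnorm{\cdot}\simeq\|\cdot\|_{m,\Omegah}$ (valid on $\femspace$ and on $H^m$-functions restricted to $\Omegah$) reduces the problem to bounding $\hnorm{u_h-\vsf_h}$. Since $u_h-\vsf_h\in\femspace$, coercivity gives
\begin{equation*}
\hnorm{u_h-\vsf_h}^2 = a_h(u_h-\vsf_h,u_h-\vsf_h) = a_h(u_h,u_h-\vsf_h) - a_h(\vsf_h,u_h-\vsf_h).
\end{equation*}
Using the discrete equation \eqref{Ham:disc} with test function $u_h-\vsf_h\in\femspace$, the first term equals $\int_{S^1}\tilde f\cdot(u_h-\vsf_h)\circ g\dth$.

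Now add and subtract $a_h(u,u_h-\vsf_h)$ to introduce the consistency error:
\begin{equation*}
\hnorm{u_h-\vsf_h}^2 = \Bigl[\int_{S^1}\tilde f\cdot(u_h-\vsf_h)\circ g\dth - a_h(u,u_h-\vsf_h)\Bigr] + a_h(u-\vsf_h,u_h-\vsf_h).
\end{equation*}
The first bracket, divided by $\normmdh{u_h-\vsf_h}$, is bounded by the supremum appearing in \eqref{eq:strang} since $u_h-\vsf_h\in\femspace$. For the second term, Cauchy--Schwarz on the broken inner product induced by $a_h$ (which is valid because $a_h$ is symmetric positive definite and is equivalent to the broken $H^m$ inner product) yields $|a_h(u-\vsf_h,u_h-\vsf_h)|\lesssim\|u-\vsf_h\|_{m,\Omegah}\hnorm{u_h-\vsf_h}$.

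Dividing through by $\hnorm{u_h-\vsf_h}$ and invoking norm equivalence once more gives
\begin{equation*}
\hnorm{u_h-\vsf_h}\lesssim \|u-\vsf_h\|_{m,\Omegah} + \sup_{\wsf\in\femspace}\frac{|\int_{S^1}\tilde f\cdot\wsf\circ g\dth - a_h(u,\wsf)|}{\normmdh{\wsf}}.
\end{equation*}
Combining with the triangle inequality step and taking the infimum over $\vsf_h\in\femspace$ produces \eqref{eq:strang}. The only conceptually delicate point—and the reason a Strang-type rather than a Cé\`a-type estimate is needed—is that $u\in H_0^m(\Omega)^d$ is \emph{not} a valid test function in \eqref{Ham:disc}, so Galerkin orthogonality fails and the residual $\int_{S^1}\tilde f\cdot\wsf\circ g\dth - a_h(u,\wsf)$ for $\wsf\in\femspace\not\subset H_0^m(\Omega)^d$ must be retained explicitly as the consistency error; no cancellation of boundary/jump contributions is used at this stage, so no additional regularity of $u$ beyond $H^3$ is invoked.
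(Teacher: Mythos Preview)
Your argument is correct and follows essentially the same route as the paper's proof: both use the standard second Strang decomposition, testing against the discrete difference, invoking the discrete equation \eqref{Ham:disc}, and splitting into an approximation term (bounded by continuity of $a_h$) and a consistency residual (bounded by the supremum). The only cosmetic difference is that the paper keeps the test function generic and takes the supremum at the end, whereas you immediately specialise to $u_h-\vsf_h$ and then bound by the supremum; the logic is identical.
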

\begin{proof}
Let $\vsf_h, \wsf\in\femspace$:
\begin{align*}
a_h(\uth - \wsf, \vsf_h) & = a_h(\uth - u, \vsf_h)+a_h(u-\wsf,\vsf_h)\\
& = \int_{S^1} \tilde f \cdot \vsf_h \circ g\dth - a_h(u, \vsf_h)+a_h(u- \wsf,\vsf_h)\,.
\end{align*}
Dividing by $\hnorm{\vsf_h}$ and using coercivity and continuity of $a_h$ we can
therefore claim:
\begin{align*}
\hnorm{u_h-\wsf} \lesssim \sup_{\vsf_h\in \femspace}\frac{
\int_{S^1} \tilde f \cdot \vsf_h \circ g\dth
- a_h(u, \vsf_h)|}{\hnorm{\vsf_h}}
+  \|u-\wsf\|_{m, \Omegah}\,,
\end{align*}
which together with the triangle inequality yields the result.
\end{proof}
We say that a discretisation for which the terms on the right-hand side of
\eqref{eq:strang} vanish as $h\rightarrow 0$ is an \emph{approximate} and
\emph{consistent} discretisation. Approximation is guaranteed via classic
element-wise interpolation estimates - we briefly state the main results from
\cite{wu2019nonconforming} adapted to the current setting.

\begin{definition}[Canonical interpolation operator]
For a given $K\in\Omegah$ let us denote by $\Pi_K:
H^{m}(K)\rightarrow \tilde{P}_K^{(3,2)}$, the canonical interpolation operator
where $\tilde{P}_K^{(3,2)} \subset \mathcal{P}^7(K)$ is a seventh 
order polynomial space defined in \cite[Equation 4.1]{wu2019nonconforming}.
Section 4 of \cite{wu2019nonconforming} describes this space in great detail. A
technical result \cite[Lemma 4.1]{wu2019nonconforming} determines the local
degrees of freedom that show unisolvency of $\tilde{P}_K^{(3,2)}$ i.e. those
that uniquely determine any $v\in \tilde{P}_K^{(3,2)}$. This unisolvency
property allows us to sketch the definition of $\Pi_K$:
\[
\Pi_K v = \sum_{j=1}^\mathfrak{m} l_{K,j} n_{K,j}(v),\qquad \forall v\in H^{m}(K),
\]
where $\{ l_{K,j}\}_{j=1}^\mathfrak{m}$ is a local basis and $\{ n_{K,j} \}_{j=1}^\mathfrak{m}$ (for
some positive integer $\mathfrak{m}$) is a nodal basis satisfying $n_{K,i}(l_{K,k}) =
\delta_{ik}$, where $\delta_{ik}$ is the Kronecker delta. See section 2.3 of the
aforementioned reference for an example of a constructing of such a local
interpolant.
\end{definition}

By standard interpolation theory (\cite{brennerscott}, \cite[Lemma
2.4]{wu2019nonconforming}) we have:
\begin{lemma}\label{lemma:localK}
For $s\in [0,1]$ and $k$ such that $0\leq k \leq m$ and $K\in\Omegah$:
\[
|v-\Pi_K|_{k, K} \lesssim h_K^{m+s-k} |v|_{m+s,K},\quad \forall v\in H^{m+s}(K).
\]
\end{lemma}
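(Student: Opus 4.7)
My plan is to prove this as a standard finite-element interpolation estimate using the classical three-step strategy: scale to a reference element, apply a Bramble--Hilbert argument there, and rescale back. Let $\hat K$ be a fixed reference triangle and let $F_K : \hat K \to K$ be the affine bijection between it and $K\in\Omegah$; since the mesh is shape regular, the usual equivalences $|\hat v|_{j,\hat K} \simeq h_K^{j-d/2}|v|_{j,K}$ hold for every integer $j$ (and, via Hilbert interpolation between integer orders, for fractional orders as well), with constants independent of $K$. The interpolation operator $\hat\Pi_{\hat K}$ on the reference element is defined analogously to $\Pi_K$ using the same degrees of freedom, and one checks that $\Pi_K v = (\hat\Pi_{\hat K} \hat v)\circ F_K^{-1}$. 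Transferring both sides of the desired inequality to $\hat K$, matters reduce to showing
\[
|\hat v - \hat\Pi_{\hat K}\hat v|_{k,\hat K} \lesssim |\hat v|_{m+s,\hat K},\quad \forall\hat v\in H^{m+s}(\hat K),
\]
for $s\in[0,1]$ and $0\leq k\leq m$, with a constant depending only on $\hat K$, $m$ and $k$.

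The second step is the Bramble--Hilbert argument on $\hat K$. First one must check that $\hat\Pi_{\hat K}:H^{m+s}(\hat K)\to H^m(\hat K)$ is a bounded linear operator. This is where the choice $m=3$ helps: all degrees of freedom in $\tilde P_K^{(3,2)}$ involve at most two normal derivatives integrated along an edge, or values and first derivatives evaluated at vertices. By the Sobolev embedding $H^{m+s}(\hat K)\hookrightarrow \mathsf{C}^1(\bar{\hat K})$ (valid for $m+s\geq 3 > 1 + d/2$) and the trace theorem for $H^{m+s-1/2}$ applied to second derivatives, each nodal functional is continuous on $H^{m+s}(\hat K)$, and hence $I - \hat\Pi_{\hat K}$ is bounded. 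Since $\tilde P_{\hat K}^{(3,2)}$ contains $\mathcal{P}_3(\hat K)=\mathcal{P}_m(\hat K)$, the operator $I - \hat\Pi_{\hat K}$ annihilates polynomials of degree at most $m$. By the Bramble--Hilbert lemma (or the Deny--Lions lemma applied in the quotient space $H^{m+t}(\hat K)/\mathcal{P}_m(\hat K)$), one obtains the two endpoint estimates
\[
|\hat v - \hat\Pi_{\hat K}\hat v|_{k,\hat K}\lesssim |\hat v|_{m+t,\hat K},\quad t\in\{0,1\}.
\]

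To cover the fractional case $s\in(0,1)$, I would invoke the K-method of real interpolation between $H^m(\hat K)$ and $H^{m+1}(\hat K)$, whose interpolation space is $H^{m+s}(\hat K)$. The operator $I-\hat\Pi_{\hat K}$, being bounded from each endpoint space into $H^k(\hat K)$, extends by interpolation to a bounded operator from $H^{m+s}(\hat K)$ into $H^k(\hat K)$ with the required seminorm bound. Finally, mapping back to $K$ using the scaling $|\hat v|_{j,\hat K}\simeq h_K^{j-d/2}|v|_{j,K}$ on the left with $j=k$ and on the right with $j=m+s$ inserts the factor $h_K^{(m+s)-k}$ and yields the stated estimate.

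The main obstacle is the fractional-order case: verifying that $\hat\Pi_{\hat K}$ is indeed bounded on $H^{m+s}(\hat K)$ and that the Bramble--Hilbert inequality survives the interpolation step. The boundedness follows as above from Sobolev embedding and the trace theorem, since $m=3$ is more than enough regularity for all the functionals in $\tilde P_K^{(3,2)}$; the interpolation step is then routine once the endpoints $s=0$ and $s=1$ are established. Everything else is standard scaling.
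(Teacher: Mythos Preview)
Your proposal is correct and is precisely the standard argument that the paper defers to: the paper does not give a proof at all but simply cites \cite{brennerscott} and \cite[Lemma 2.4]{wu2019nonconforming}, and the scaling/Bramble--Hilbert/interpolation route you outline is exactly what those references contain. One minor point worth tightening is the passage from the endpoint estimates to the fractional case: operator interpolation delivers a bound in the full $H^{m+s}$ norm rather than the seminorm $|\cdot|_{m+s,\hat K}$, so to land on the seminorm you should either invoke the fractional Deny--Lions/Dupont--Scott result directly or note that $I-\hat\Pi_{\hat K}$ factors through the quotient $H^{m+s}(\hat K)/\mathcal{P}_m(\hat K)$ before interpolating.
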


The \emph{global} operator $\pi_h$ on $H^m(\Omega)^d$ into $\femspace$ is
defined by $(\pi_h v)|_K = \Pi_h|_K (v|_K)$ for $K \in \Omegah$ and satisfies 
the following standard result:
\begin{theorem}\label{thm25}
For $v\in H^{m+s}(\Omega)$, $s \geq 0$ we have:
\begin{align*}
& \|v - \pi_hv\|_{m, \Omegah} \leq h^s |v|_{m+s,\Omega},\\
& \lim_{h\rightarrow 0} \|v - \pi_h v\|_{m, \Omegah} = 0.
\end{align*}
\end{theorem}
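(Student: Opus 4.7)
The plan is to reduce to the local estimate in Lemma \ref{lemma:localK} by summing over elements, and then to treat the limit by a density argument when $s=0$.

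First, I would fix $v \in H^{m+s}(\Omega)$ and apply Lemma \ref{lemma:localK} componentwise to $v|_K$ on each $K\in\Omegah$, for every $k\in\{0,1,\dots,m\}$. This yields
\[
|v - \Pi_K v|_{k, K}^2 \lesssim h_K^{2(m+s-k)} |v|_{m+s, K}^2.
\]
Since $h_K$ is bounded by $\operatorname{diam}(\Omega)$, each factor $h_K^{2(m+s-k)}$ may be bounded by a constant multiple of $h_K^{2s}$ for $k=0,\dots,m$, and summing over $k$ gives $\|v-\Pi_K v\|_{m,K}^2 \lesssim h_K^{2s}|v|_{m+s,K}^2$ after using the definition $(\pi_h v)|_K = \Pi_K(v|_K)$.

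Next, I would sum this over $K\in\Omegah$, invoke quasi-uniformity to replace $h_K$ by $h$, and use the definition \eqref{brokenmd_scalar} of the broken norm together with the fact that the seminorms on disjoint elements reassemble into the global seminorm:
\[
\|v - \pi_h v\|_{m,\Omegah}^2 = \sum_{K\in\Omegah}\|v-\Pi_K v\|_{m,K}^2 \lesssim h^{2s}\sum_{K\in\Omegah}|v|_{m+s,K}^2 = h^{2s}|v|_{m+s,\Omega}^2,
\]
which after taking square roots is the first assertion (up to an absolute constant hidden in $\lesssim$).

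For the second assertion, when $s>0$ it is immediate from the first estimate. The case $s=0$ is the one that requires some care, and this is where I expect the main obstacle: the first estimate only gives stability in that regime, not convergence. Here I would use density of $\mathsf{C}^\infty(\bar\Omega)$ (or $H^{m+1}(\Omega)$) in $H^m(\Omega)$: given $\varepsilon>0$, pick a smooth $\tilde v$ with $\|v-\tilde v\|_{m,\Omega}<\varepsilon$, then split
\[
\|v-\pi_h v\|_{m,\Omegah}\leq \|v-\tilde v\|_{m,\Omega} + \|\tilde v-\pi_h \tilde v\|_{m,\Omegah} + \|\pi_h(\tilde v - v)\|_{m,\Omegah}.
\]
The first term is at most $\varepsilon$ by choice of $\tilde v$. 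The middle term vanishes as $h\to 0$ by the first estimate applied to $\tilde v \in H^{m+1}(\Omega)$ with $s=1$. The final term requires uniform-in-$h$ stability of $\pi_h: H^m(\Omega) \to (\femspace, \|\cdot\|_{m,\Omegah})$, which I would establish by a standard reference-element scaling argument (pulling $\Pi_K$ back to $\hat K$, using equivalence of norms on the finite-dimensional space $\tilde P_{\hat K}^{(3,2)}$, and scaling back). Letting $h\to 0$ and then $\varepsilon\to 0$ closes the argument.
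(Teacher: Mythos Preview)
Your proposal is correct and follows essentially the same route as the paper, which simply states that the first inequality is ``obvious from lemma \ref{lemma:localK}'' and defers the second to \cite[Theorem 2.5]{wu2019nonconforming}; you have supplied the details those references contain, namely summing the local estimate over elements and closing the $s=0$ case by density (exactly as the paper already indicates in the proof of Lemma \ref{lemma:approx}). One minor simplification: the uniform stability $\|\pi_h w\|_{m,\Omegah}\lesssim\|w\|_{m,\Omega}$ you plan to prove by scaling is in fact an immediate consequence of your first estimate at $s=0$, since $\|\pi_h w\|_{m,\Omegah}\le\|w\|_{m,\Omegah}+\|w-\pi_h w\|_{m,\Omegah}\lesssim\|w\|_{m,\Omega}$.
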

\begin{proof}
The first property is obvious from lemma \ref{lemma:localK}. For the second, see
\cite[Theorem 2.5]{wu2019nonconforming}.
\end{proof}

This states that globally $m+s$ weakly differentiable functions can be
well-approximated in the norm $\|\cdot\|_{m, \Omegah}$.  However, by remark
\ref{rmk:reg}, global higher-order regularity of solutions to \eqref{Ham:cont} 
is not possible. Local regularity may, however, be
recovered by the following lemma.

\begin{lemma}[Hypoellipticity]\label{lemma:local_regularity}
Let $u$ be the solution to \eqref{Ham:cont} and define by $\oin$, $\oout$, (resp. $\oinh$, $\oouth$) the connected open sets bounded by $g\circ S^1$ and $\partial\Omega$ (resp. $\partial\Omegah$ and $g\circ S^1$) on the inside and outside of the curve. Then the restriction of $u$ to the sets $\oin$, $\oout$ is $\mathsf{C}^\infty$ in each component.\\
\end{lemma}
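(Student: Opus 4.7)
The plan is to exploit that the source $f$ is a distribution concentrated on the curve $G := g\circ S^1$, so that on each of the open sets $\oin$ and $\oout$ the PDE reduces to $Bu = 0$ in the distributional sense. Since $B = (\idop - b\Delta)^m$ is a constant-coefficient elliptic operator, interior elliptic regularity (equivalently, hypoellipticity of $B$) will then upgrade the solution to $\mathsf{C}^\infty$ on each component.

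Concretely, I would fix an arbitrary open set $W$ with $\bar W \subset \oin$ (the case of $\oout$ being identical), take any vector-valued test field $\varphi \in \mathsf{C}_c^\infty(W)^d$, and plug it into \eqref{Ham:cont}. Because $\textnormal{supp}(\varphi)$ is disjoint from $G$, the composition $\varphi\circ g$ vanishes identically on $S^1$, so the right-hand side of \eqref{Ham:cont} is zero. Since $\varphi$ is compactly supported in $W$ and $u \in H^3(\Omega)^d$, the identity $a(u,\varphi)=0$ recasts componentwise in the sense of distributions as $\langle (\idop - b\Delta)^m u^i, \varphi^i\rangle = 0$ for $i=1,\dots,d$. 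Thus each component $u^i$ is a distributional solution of the scalar equation $(\idop - b\Delta)^m u^i = 0$ on $W$.

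The operator $P := (\idop - b\Delta)^m$ has constant, smooth coefficients and principal symbol $(b|\xi|^2)^m$, which is strictly positive for $\xi\ne 0$; hence $P$ is elliptic of order $2m$, and therefore hypoelliptic. Applying the standard interior elliptic regularity theorem (see e.g. \cite[Chapter 6]{evans}) to $Pu^i = 0$ on $W$ yields $u^i \in \mathsf{C}^\infty(W)$. Since $W$ with $\bar W \subset \oin$ was arbitrary, I conclude $u^i \in \mathsf{C}^\infty(\oin)$; repeating the argument on open sets compactly contained in $\oout$ gives $u^i \in \mathsf{C}^\infty(\oout)$.

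The only non-routine point, and the main obstacle such as it is, is bookkeeping at the junction between the variational and distributional viewpoints: one must verify that restricting to test functions supported strictly away from $G$ indeed annihilates the singular data, and that the componentwise integration-by-parts identity expressing $a(u,\varphi)$ as the distributional action of $(\idop - b\Delta)^m u$ against $\varphi$ is legitimate under the regularity $u\in H^3(\Omega)^d$. Both are standard, after which the conclusion reduces to a direct invocation of hypoellipticity of $(\idop - b\Delta)^m$ on $\Omega\setminus G$.
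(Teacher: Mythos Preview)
Your proposal is correct and follows essentially the same route as the paper: both arguments observe that the singular source vanishes on test functions supported away from $G=g\circ S^1$, so each component of $u$ satisfies $Bu^i=0$ distributionally on any open $W\Subset\Omega^\pm$, and then invoke hypoellipticity of the constant-coefficient elliptic operator $B=(\idop-b\Delta)^m$ to conclude $u^i\in\mathsf{C}^\infty$. Your write-up is in fact more explicit than the paper's about why the right-hand side vanishes and about the passage from the variational identity to the distributional equation; the paper simply cites Folland for the hypoellipticity of constant-coefficient elliptic operators and applies it directly.
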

\begin{proof}
We recall that a differential operator $\mathfrak D$ is hypoelliptic
\cite{brezis2010functional} if for any open set $\omega$,
$\mathsf{C}^\infty(\omega)\ni \mathfrak Dz \Rightarrow z \in
\mathsf{C}^\infty(\omega)$ \cite[Theorem
6.33]{folland1995introduction}\footnote{For completeness we highlight that
although this result is stated for distributions it is also valid for the kind of
Hilbert-Sobolev spaces we consider here and we refer to the comment on
this extension near the end of part C of Chapter 6 of
\cite{folland1995introduction}.}. Moreover, any constant coefficient elliptic
operator is in fact hypoelliptic \cite[Corollary 6.34]{folland1995introduction}.
We observe that the operator $B$ associated to the bilinear form $a(\cdot,
\cdot)$ is hypoelliptic on open sets, and so in fact, for any open set
$\omega\subset\Omega$ such that $\bar\omega\subset\Omega$ is compact and such
that $\omega\cap g\circ S^1 = \emptyset$, $Bu|_\omega \in
\mathsf{C}^\infty(\omega)$. In the case of the curve
$g \circ S^1$ it is easy to see that for any $K\in\Omegah$ we can solve an
elliptic problem with smooth data on the interior and recover the local
estimate.  The key property here is convexity of $K$, see e.g.  \cite[Chapter
4]{grisvard2011elliptic}.
\end{proof}
From lemma \ref{lemma:local_regularity} we can state the following nonconforming estimate using theorem \ref{thm25} applied
in an element-wise manner.
\begin{corollary}\label{cor:approx}
Let $v \in H^{m}(\Omega)$ be such that $v|_K \in H^{m+s}(K)^d$, $\forall K
\in\Omegah$, with $m=d+1$, $s \geq 0$. Then:
\begin{align*}
& \|v - \Pi_h v\|_{m, \Omegah} \leq h^s |v|_{m+s,\Omegah},\\
& \lim_{h\rightarrow 0} \|v - \Pi_h v\|_{m, \Omegah} = 0.
\end{align*}
\end{corollary}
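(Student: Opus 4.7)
The plan is to apply Lemma \ref{lemma:localK} on each element $K\in\Omegah$ and then assemble the local bounds using the broken-norm definition \eqref{brokenmd_scalar}. Because $v|_K\in H^{m+s}(K)^d$ by hypothesis, the local estimate is available on every simplex, which is exactly the regularity input that Theorem \ref{thm25} lacks in the present setting of a source supported on the embedded curve.

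For the first bound, I would begin with the element-wise estimate
\[
|v-\Pi_K v|_{k,K}^2 \lesssim h_K^{2(m+s-k)}|v|_{m+s,K}^2, \qquad 0\leq k\leq m,
\]
coming from Lemma \ref{lemma:localK}, and invoke quasi-uniformity to replace $h_K$ by $h$ at the cost of an absolute constant. Recalling that $(\Pi_h v)|_K = \Pi_K(v|_K)$ and that $\|\cdot\|_{m,K}^2 = \sum_{k=0}^m|\cdot|_{k,K}^2$, I would sum the squared semi-norms over $k$ and then over $K\in\Omegah$, factoring out $h^{2s}$ since $m+s-k\geq s$ for every $k\leq m$. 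Taking square roots produces the claimed inequality $\|v-\Pi_h v\|_{m,\Omegah}\lesssim h^{s}|v|_{m+s,\Omegah}$.

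For the limit statement, the case $s>0$ is immediate from the first bound. The only substantive work is the borderline case $s=0$, where I would run the element-wise analogue of the density argument that underpins \cite[Theorem 2.5]{wu2019nonconforming}: given $\varepsilon>0$, approximate $v|_K$ in $H^m(K)^d$ by a smoother $w_K$, then split $\|v-\Pi_K v\|_{m,K}$ via the triangle inequality into $\|v-w_K\|_{m,K}$, $\|w_K-\Pi_K w_K\|_{m,K}$ and $\|\Pi_K(w_K-v)\|_{m,K}$. The first and third are controlled by the density step and the local boundedness of $\Pi_K$ (i.e.\ Lemma \ref{lemma:localK} with $s=0$, $k=m$), while the middle term decays with $h_K$ by the first bound applied to $w_K$. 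Aggregating over $K\in\Omegah$ and letting $h\to 0$ drives the whole broken norm below the chosen tolerance.

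The main obstacle is ensuring that the implicit constants in Lemma \ref{lemma:localK} and in the local boundedness of $\Pi_K$ can be taken independent of $K$ and $h$. This is standard: pulling back to a reference simplex via an affine map and invoking the shape-regularity assumption \eqref{eq:shapereg} furnishes uniform constants, so the passage from the global statement in Theorem \ref{thm25} to the piecewise-regular setting of the corollary reduces to bookkeeping.
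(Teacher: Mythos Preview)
Your derivation of the first bound is exactly what the paper intends: the corollary is stated in the paper without a proof environment, preceded only by the remark that it follows from Theorem~\ref{thm25} ``applied in an element-wise manner'', and your Lemma~\ref{lemma:localK}--then--sum argument is precisely that.

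There is, however, a genuine gap in your treatment of the limit when $s=0$. You propose to approximate $v|_K$ by a smoother $w_K$ on each element and then let $h\to 0$. But the elements $K$ themselves change with $h$: as the mesh is refined you are forced to re-choose every $w_K$, and there is no uniform control on $|w_K|_{m+s,K}$ along the refinement, so ``the middle term decays with $h_K$'' is not justified. The density argument underlying \cite[Theorem~2.5]{wu2019nonconforming} (and hence Theorem~\ref{thm25}) is \emph{global}: one fixes a single $w\in C^\infty(\bar\Omega)^d$ with $\|v-w\|_{m,\Omega}<\varepsilon$ using $v\in H^m(\Omega)^d$, then splits $\|v-\Pi_h v\|_{m,\Omegah}$ with this fixed $w$, so that the middle term $\|w-\Pi_h w\|_{m,\Omegah}\lesssim h\,|w|_{m+1,\Omega}$ genuinely tends to zero with $w$ held constant. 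Replacing your element-wise $w_K$ by a single global $w$ fixes the argument with no other changes.
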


Corollary \ref{cor:approx} establishes an important local approximation property
for the discretisation \eqref{Ham:disc}. We recall that consistency of the
discretisation means estimating to what extent, as a function of $h$, the continuous solution \eqref{Ham:cont} satisfies the equation \eqref{Ham:disc}. This property can be verified by e.g. the generalised patch test (GPT) of Stummel
\cite{stummel1979generalized} by means of the compactness condition given in definition \ref{def:consist} via the result in \cite{wang2013minimal}.\\

The proof of consistency of $\tilde{V}_h$ was omitted in
\cite{wu2019nonconforming} so for completeness we apply an argument similar to
\cite[Theorem 3.2]{wang2013minimal} using the weak continuity lemma
\ref{lemma:wkcts} to show this.  This property states that \emph{integrals} of
derivatives of functions in $\femspace$ are smooth across interior edges of a
triangulation, rather than smoothness in a pointwise sense. 
\begin{remark}[Bibliographic note]
Weak continuity is indeed sufficient (but not necessary) for the
so-called ``\textbf{SPT}'' condition mentioned in Theorem 3.2 of \cite{wang2013minimal}. The
following theorems are not novel and are simply included to benefit any
unfamiliar readers with the literature.
\end{remark}

\begin{theorem}\label{thm:consistency}
$\femspace$ is a consistent approximation of $H^3(\Omega)^d$.
\end{theorem}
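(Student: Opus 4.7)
The plan is to verify the two defining conditions of a consistent approximation from Definition \ref{def:consist}.

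\textbf{Approximation.} Given $v \in H^m(\Omega)^d$ and $\epsilon > 0$, I choose a smooth $v^\epsilon \in C^\infty(\bar\Omega)^d$ with $\|v - v^\epsilon\|_{m,\Omega} < \epsilon$ by density of smooth functions in $H^m$. Then
$$\inf_{w_h \in \femspace} \normmdh{v - w_h} \leq \normmdh{v - \pi_h v^\epsilon} \leq \|v - v^\epsilon\|_{m,\Omega} + \normmdh{v^\epsilon - \pi_h v^\epsilon},$$
where the second summand tends to $0$ as $h \to 0$ by Theorem \ref{thm25}. Taking $\epsilon \to 0$ gives the approximation condition.

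\textbf{Weak compactness.} Following the strategy of \cite[Theorem 3.2]{wang2013minimal}, I verify the GPT condition \eqref{eq:gpt-cond:BG}, which combined with approximation implies weak compactness. Passing from the triangle-wise integration by parts to an edge sum, $T_{\bfalpha,i}(\psi, v_h) = \sum_{e \in \mathcal{E}_h} \int_e \psi \jump{\partial^\bfalpha v_h}_i \ds$, and I show this vanishes as $h \to 0$ uniformly over $\|v_h\|_{m,\Omegah} \leq 1$ for every $|\bfalpha| < m = 3$ and every $\psi \in C^\infty(\bar\Omega)$. For $|\bfalpha| = 0$, the continuous embedding $\tilde V_h \hookrightarrow C^0(\Omega)$ from the corollary above makes all interior-edge jumps vanish pointwise, while the vanishing of the boundary degrees of freedom forces $v_h \equiv 0$ on $\partial\Omega$, killing boundary contributions. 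For $|\bfalpha| \in \{1, 2\}$, lemma \ref{lemma:wkcts} supplies the zero-mean property $\int_e \jump{\partial^\bfalpha v_h}_i \ds = 0$, so that $\psi$ may be replaced by $\psi - \bar\psi_e$ (its edge average) without changing the integral. A Poincaré inequality on $e$ then bounds $\|\jump{\partial^\bfalpha v_h}_i\|_{L^2(e)}$ by $h$ times the tangential derivative of the jump; this tangential derivative is estimated by $\|\partial^{|\bfalpha|+1} v_h\|_{L^2(\partial K \cup \partial K')}$, which a scaled trace inequality converts to a broken bulk norm controlled by $\|v_h\|_{m,\Omegah}$. Combining with $\|\psi - \bar\psi_e\|_{L^2(e)} \lesssim h^{3/2} \|\nabla\psi\|_\infty$ and Cauchy-Schwarz over the $\mathcal O(h^{-d})$ edges delivers $|T_{\bfalpha,i}(\psi, v_h)| \lesssim h$.

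The main technical obstacle lies in the $|\bfalpha| = 2$ case, since the scaled trace inequality then introduces a fourth-order derivative of $v_h$ which the hypothesis $\|v_h\|_{m,\Omegah} \leq 1$ does not directly control. I would resolve this with an inverse estimate $\|\partial^4 v_h\|_{L^2(K)} \lesssim h^{-1}\|\partial^3 v_h\|_{L^2(K)}$ valid on the finite-dimensional shape space $\tilde P_K^{(3,2)}$ with a constant depending only on shape regularity and polynomial degree - this is where the seventh-order polynomial structure of $\tilde V_h$ enters crucially. Once the GPT is verified, the consistency portion of \cite[Theorem 3.2]{wang2013minimal} closes the argument.
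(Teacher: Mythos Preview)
Your argument follows the same route as the paper's: establish approximation via Theorem~\ref{thm25} and then verify the GPT condition~\eqref{eq:gpt-cond} edge-by-edge using the weak continuity of Lemma~\ref{lemma:wkcts}, in the spirit of \cite[Theorem~3.2]{wang2013minimal}. The paper's presentation is considerably terser---it simply bounds by $\|\psi\|_{L^\infty(e)}$ and invokes weak continuity to declare each edge integral identically zero---whereas your subtract-the-edge-mean, Poincar\'e-on-the-jump, scaled-trace, and inverse-estimate chain is the more careful quantitative execution of the same idea; in particular your treatment of the $|\bfalpha|=2$ case via an inverse inequality on the polynomial shape space $\tilde P_K^{(3,2)}$ correctly supplies a detail that the paper's shortcut (which, read literally, bounds $|\int_e \psi f\,ds|$ by $\|\psi\|_\infty\,|\int_e f\,ds|$) glosses over.
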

\begin{proof}
We easily verify coercivity and continuity of $a$ and $a_h$ by
corollary \ref{cor:wp1} required by the GPT \cite[Section 1.4, Equation
7]{stummel1979generalized}. Using the form of the test as stated in
\cite[Equation 4.4]{wang2001necessity}, $\tilde{V}_h$ passes the GPT if and only
if the following condition stated is satisfied:
\begin{equation}\label{eq:gpt-cond}
\lim_{h\rightarrow 0} \sup_{\substack{v_h\in\tilde{V}_h \\ \hnorm{v_h}\leq 1}} |
T_{\bfalpha, i} (\psi, v_h) | = 0,\quad |\mathbf{\bfalpha}|<m,\quad 1\leq i\leq d,
\quad \forall \psi \in \mathsf{C}^\infty(\bar{\Omega}),
\end{equation}
where:
\[
T_{\bfalpha, i} (\psi, v_h) = \sum_{K\in \Omegah} \int_K \big[\psi \frac{\partial}{\partial
x_i} \partial^\bfalpha v_h + \frac{\partial\psi}{\partial x_i} \partial^\bfalpha v_h 
\big]\diff x\,.
\]
Equivalently, where $\eta_K$ is the outward normal of $K$:
\begin{equation}\label{eq:test2}
T_{\bfalpha, i} (\psi, v_h) = \sum_{K\in \Omegah} \int_{\partial K} \psi
\partial^\bfalpha v_h \eta_K^i \diff s\,.
\end{equation}
For an edge $e\in\meshskelint$ between $K, K'\in\Omegah$ and let $\eta_K$ (resp.
$\eta_{K'}$) denote the outward normal to $K$ (resp. $K'$). We can then write
\eqref{eq:test2} as:
\begin{align*}
T_{\bfalpha, i} (\psi, v_h) & = \sum_{e\in\meshskelint} \int_e \psi
\partial^\bfalpha v_h|_{K} \eta_{K}^i + \psi \partial^\bfalpha v_h|_{K'}
\eta_{{K'}}^i \diff s + \sum_{ e\in\partial\Omegah} \int_e \psi
\partial^\bfalpha v_h \eta_{\partial\Omegah}^i \diff s\\
& = \sum_{e\in\meshskelint} \int_e \psi\big[\partial^\bfalpha v_h|_{K}  -
 \partial^\bfalpha v_h|_{K'} \big]\eta_{{K'}}^i \diff s
 + \sum_{e\in\partial\Omegah} \int_e \psi \partial^\bfalpha v_h
 \eta_{\partial\Omegah}^i \diff s\\
& \lesssim \sum_{e\in\meshskelint}  \|\psi\|_{L^\infty(e)} \int_e \big[\partial^\bfalpha v_h|_{K} -
 \partial^\bfalpha v_h|_{K'} \big] \diff s\\
& \qquad + \sum_{e\in\partial\Omegah} \|\psi\|_{L^\infty(e)} \int_e \partial^\bfalpha v_h \diff s\\
& = 0.
\end{align*}
Here we have used lemma \ref{lemma:wkcts} stating that facet
integral moments of first and second order match between cells, so the
consistency condition is satisfied so $\tilde{V}_h$ passes the GPT, which
implies the same result for $\femspace$ which is therefore a consistent
approximation of $H^3(\Omega)^d$.
\end{proof}

We use the consistent approximation properties of $\femspace$ to show a convergence result for the discretisation in problem \ref{weaktri:disc}. First we need to prove a technical lemma:
\begin{lemma}\label{lemma:technical}
Let $\vsf \in H^3(\Omega)^d$ and the sequence $\{\vsf_h\}_{h>0}$ in $\femspace$ be
such that $\lim_{h\rightarrow 0}\hnorm{\vsf-\vsf_h} = 0$. Then, for any
$g\in\mathsf{C}^0(S^1)^d$ such that $g \circ S^1$ is comprised of edges $e\in\meshskelint$ of a shape-regular, quasi-uniform mesh $\Omegah$, $(\vsf_{h_k}-\vsf)\circ g$ converges a.e. pointwise to zero on $S^1$ (with respect to the $1$-dimensional Lebesgue measure).
\end{lemma}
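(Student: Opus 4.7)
The plan is to promote $\hnorm{\cdot}$-convergence to $L^2$-convergence of the pulled-back traces on $S^1$, from which pointwise a.e. convergence of a subsequence follows by a standard measure-theoretic argument. Two observations drive the proof: first, the global continuity of the error $w_h := \vsf - \vsf_h$ lets us promote its piecewise $H^1$ regularity to genuine $H^1(\Omega)^d$ membership; second, the curve $g\circ S^1$ bounds a Lipschitz subdomain at positive distance from $\partial\Omega$, so a single application of the $H^1$ trace theorem suffices.

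For the first step, I would observe that $\vsf_h \in \femspace \hookrightarrow C^0(\Omega)^d$ by the corollary established in Section 1, and $\vsf \in H^3(\Omega)^d \hookrightarrow C^0(\Omega)^d$ by the two-dimensional Sobolev embedding, so $w_h$ is globally continuous and piecewise $H^3$ on $\Omegah$. Testing the piecewise weak gradient against $\phi \in C^\infty_c(\Omega)^d$ and integrating by parts cell-by-cell, the boundary contribution on every interior edge $e\in\meshskelint$ shared by $K,K'\in\Omegah$ reads
\[
\int_e \bigl[w_h|_K - w_h|_{K'}\bigr]\cdot\phi\,\nu \ds \;=\; 0
\]
because continuity of $w_h$ forces the jump to vanish, while contributions on $\partial\Omegah$ vanish by the compact support of $\phi$ in $\Omega$. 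Hence the piecewise gradient is the distributional gradient, so $\|w_h\|_{1, \Omega} = \|w_h\|_{1, \Omegah} \leq \hnorm{w_h} \longrightarrow 0$.

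From here I would invoke the $H^1$-trace theorem on the Lipschitz subdomain $\oin$ whose boundary is $g\circ S^1$ to conclude
\[
\|w_h\|_{0, g\circ S^1} \;\lesssim\; \|w_h\|_{1, \oin} \;\leq\; \|w_h\|_{1, \Omega} \;\longrightarrow\; 0,
\]
and then, since $g$ is piecewise affine and embedded with nonzero (constant) derivative on each $S^1_i$ with $i \in \Jindex$, a change of variables yields $\|w_h \circ g\|_{0, S^1} \lesssim \|w_h\|_{0, g \circ S^1}\to 0$. Passing to an a.e. convergent subsequence $\{h_k\}$ finishes the proof. The one real obstacle is the globalisation step in the middle paragraph: direct cell-wise trace inequalities produce an uncontrollable $h^{-1/2}$ factor on $\|w_h\|_{0, \Omegah}$, so exploiting the continuity of $w_h$ to pass from the broken $H^1(\Omegah)$ norm to a genuine $H^1(\Omega)$ norm, and then applying a single trace theorem on the fixed curve, is essential.
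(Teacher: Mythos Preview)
Your proof is correct and takes a genuinely different route from the paper's. The paper argues via the chain
\[
\|(\vsf_{h}-\vsf)\circ g\|_{\infty,S^1}
\;\lesssim\;
\max_{i\in\Jindex}\|\vsf_{h}-\vsf\|_{1,\,g\circ S^1_i}
\;\lesssim\;
\|\vsf_{h}-\vsf\|_{2,\Omegah},
\]
i.e.\ one-dimensional Sobolev embedding on each edge followed by an element-wise $H^{2}\!\to\!H^{1}(\partial K)$ trace, thereby obtaining \emph{uniform} convergence of the full sequence on $S^1$. You instead exploit the global continuity of $w_h=\vsf-\vsf_h$ to upgrade its broken $H^1$ regularity to genuine $H^1(\Omega)^d$ membership, apply a \emph{single} $H^1$ trace on the fixed Lipschitz subdomain $\oin$, and obtain $L^2(S^1)$-convergence of $w_h\circ g$, from which an a.e.\ convergent subsequence is extracted.

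Your approach is more elementary (only $H^1$ is needed, not the broken $H^2$ norm) and sidesteps any worry about $h$-dependence of element-wise trace constants, since the trace is taken once on the $h$-independent domain $\oin$. The paper's approach buys a stronger conclusion---uniform convergence along the whole sequence rather than a.e.\ convergence of a subsequence---but the lemma as stated only asks for the latter. It is also worth noting that your intermediate conclusion $\|w_h\circ g\|_{0,S^1}\to 0$ already yields the limit \eqref{conv_sourceterms} needed in Corollary~\ref{cor:uconvergence} directly via Cauchy--Schwarz, without any appeal to dominated convergence or pointwise arguments.
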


\begin{proof}
We have:
\begin{align*}
\textnormal{ess}\,\sup_{\theta\in S^1}|(\vsf_{h_k}-\vsf)\circ g|(\theta) & = 
\textnormal{ess}\,\sup_{x \in g \circ S^1}|\vsf_{h_k}-\vsf|(x)\\
& = \max_{i\in \mathcal{J}} \textnormal{ess}\,\sup_{x \in g \circ S_i^1}|\vsf_{h_k}-\vsf|(x).
\end{align*}
Since $\vsf_{h_k}-\vsf$ is continuous everywhere, $\vsf_{h_k}-\vsf =
\avg{\vsf_{h_k}-\vsf}$ on edges of the interior mesh skeleton $\meshskelint$. 
Therefore, for any two elements $K_i^+$ and $K_i^-$ joining an edge $g \circ
S_i^1$ we can say:
\begin{align*}
\max_{i\in \mathcal{J}} \textnormal{ess}\,\sup_{x \in g\circ S_i^1}|\vsf_{h_k}-\vsf|(x)
& = \max_{i\in \mathcal{J}} \textnormal{ess}\,\sup_{x \in g\circ
S_i^1}|\avg{\vsf_{h_k}-\vsf}|(x)\\
& \leq \max_{i\in\mathcal{J}}\Big[ \textnormal{ess}\,\sup_{x \in g\circ
S_i^1}|\vsf_{h_k}|_{K_i^+}-\vsf|(x)\\
& \qquad\;\;\, + \textnormal{ess}\,\sup_{x \in g\circ
S_i^1}|\vsf_{h_k}|_{K_i^-}-\vsf|(x)\Big].
\end{align*}
By the Sobolev embedding theorem in one dimension we have for any bounded open
set $U$: $f\in H^1(U)\Rightarrow f\in\mathsf{C}^0(U)$, so:
\begin{align*}
\textnormal{ess}\,\sup_{\theta\in S^1}|(\vsf_{h_k}|_{K_i^+}-\vsf)\circ g|(\theta) &\leq 
\max_{i\in \mathcal{J}} \|\vsf_{h_k}|_{K_i^+}-\vsf\|_{1,g\circ S_i^1}.
\end{align*}
By the trace theorem we have for any $i\in \Jindex$:
\begin{align*}
\|\vsf_{h_k}|_{K_i^+}-\vsf\|_{1,g \circ S_i^1}\lesssim
\|\vsf_{h_k}-\vsf\|_{2,\Omegah}.
\end{align*}
Since $\|\vsf_{h_k}-\vsf\|_{2,\Omegah}$ converges by assumption we
conclude the proof by applying the same argument to the restriction of
$\vsf_{h_k}$ to $K_i^-$.\\

Another way to see this result is to recall that $\femspace|_K\subset L^{\infty}(K)$, $K\in\Omegah$. Since $K$ is convex, \cite[Theorem 5]{stein1970singular} tells us that an extension result exists and we can therefore ascribe meaning to the values of a function in $\femspace|_K$ on $\partial K$.
\end{proof}

\begin{corollary}\label{cor:uconvergence}
Let $u$ and $u_h$ solve \eqref{Ham:cont} and \eqref{Ham:disc}. Then we have:
\[
\lim_{h\rightarrow 0}\hnorm{u-u_h} = 0.
\]
\end{corollary}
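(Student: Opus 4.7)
The plan is to apply Strang's lemma (Lemma~\ref{strang}) and show both the approximation and the consistency terms on its right-hand side vanish as $h\to 0$. For the approximation term, take the canonical interpolant $\pi_h u$; since $u\in H^3(\Omega)^d$ by Corollary~\ref{cor:wp1}, Theorem~\ref{thm25} yields $\|u-\pi_h u\|_{m,\Omegah}\to 0$. Sharper rates are available from the piecewise smoothness of $u$ (Lemma~\ref{lemma:local_regularity}) via Corollary~\ref{cor:approx}, but for the qualitative statement this refinement is not required.

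I would treat the consistency term $\sup_{v_h\in\femspace,\,\hnorm{v_h}\le 1}|F_h(v_h)-a_h(u,v_h)|$, where $F_h(v):=\int_{S^1}\tilde f\cdot v\circ g\dth$, by contradiction. Assume there exist $h_k\downarrow 0$ and $v_{h_k}\in\femspace$ with $\hnorm{v_{h_k}}\le 1$ and $|F_{h_k}(v_{h_k})-a_{h_k}(u,v_{h_k})|\ge\delta>0$. Each broken derivative $\partial^\bfalpha v_{h_k}$ with $|\bfalpha|\le m$ is bounded in $L^2(\Omega)$, so, along a subsequence, it converges weakly to some $w^\bfalpha\in L^2(\Omega)$. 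Because $\femspace$ is a consistent approximation of $H^3(\Omega)^d$ by Theorem~\ref{thm:consistency}, the consistency clause of Definition~\ref{def:consist} supplies $w_0\in H^3(\Omega)^d$ with $w^\bfalpha=\partial^\bfalpha w_0$. Since $u\in H^m(\Omega)^d$ is a fixed element, pairing the $L^2$ factor $D^j u$ weakly with $D^j v_{h_k}$ for each $j=0,\ldots,m$ gives $a_{h_k}(u,v_{h_k})\to a(u,w_0)$.

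To pass to the limit in the source, observe that $v_{h_k}\in\mathsf{C}^0(\Omega)$ combined with the broken $H^m$ bound yields boundedness of $v_{h_k}$ in $H^1(\Omega)$, and Rellich compactness extracts a strongly $L^2(\Omega)$-convergent subsequence whose limit must coincide with $w_0$. On each piecewise-linear segment $g\circ S_i^1$, $i\in\Jindex$, the traces of $v_{h_k}$ are bounded in $H^{m-1/2}$, so a further Rellich step across the finite set $\Jindex$, combined with the trace estimate underlying the proof of Lemma~\ref{lemma:technical}, upgrades this to strong $L^2(S^1)^d$ convergence $v_{h_k}\circ g\to w_0\circ g$. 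Pairing with $\tilde f\in L^2(S^1)^d$ delivers $F_{h_k}(v_{h_k})\to F(w_0)$. Since problem~\ref{weaktri} applied with test function $w_0\in H^3(\Omega)^d$ reads $a(u,w_0)=F(w_0)$, we arrive at $0\ge\delta>0$, a contradiction, and Strang's lemma then closes the proof.

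The main obstacle is this curve-trace limit passage. Lemma~\ref{lemma:technical} is stated under a strong broken-$H^m$ hypothesis, whereas weak compactness only delivers weak convergence; bridging this gap requires the elementwise Rellich argument above. It succeeds because $g\circ S^1$ aligns with the mesh skeleton (so segmentwise traces are clean) and because $\femspace\subset\mathsf{C}^0(\Omega)$ lets the segmentwise trace limits be glued into a single global limit identifiable with $w_0\circ g$.
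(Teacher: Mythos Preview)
Your argument is correct and takes a genuinely different route from the paper. The paper does \emph{not} go through Strang's lemma for the qualitative convergence; instead it applies the weak compactness coming from consistency (Theorem~\ref{thm:consistency}) directly to the \emph{solution} sequence $\{u_h\}$, identifies the weak limit as $u$ by testing against a strongly convergent sequence $\vsf_h\to\vsf$ (this is where Lemma~\ref{lemma:technical} enters, under its stated strong-convergence hypothesis), and then upgrades weak to strong convergence by an energy argument of the form $\hnorm{u-u_h}^2\lesssim\hnorm{u-\wsf}^2+a(u,u)-2a_h(u,u_h)+F(u_h)$. You instead localise the compactness to the \emph{test-function} sequence in the consistency term of Strang, which is arguably the more natural decomposition and avoids the separate weak-to-strong step. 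One small point: your trace-limit passage is cleaner if you drop the elementwise $H^{m-1/2}$ claim (those spaces live on $h$-dependent edges) and argue globally instead---$\femspace\subset\mathsf{C}^0(\Omega)$ together with the broken bound gives $v_{h_k}$ bounded in $H^1(\Omega)$, whence the trace on the fixed curve $g\circ S^1$ is bounded in $H^{1/2}$ and compact embedding into $L^2(g\circ S^1)$ yields the strong convergence you need, with the limit identified as $w_0\circ g$ by weak continuity of the fixed trace operator. What your approach buys is a direct fit into the Strang framework; what the paper's approach buys is that Lemma~\ref{lemma:technical} can be invoked verbatim, since the test sequence there is strongly convergent by construction.
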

\begin{proof}
Essential to our proof is the fact that the curve $\Jindex$ is independent of the mesh resolution. The reader should think about this corollary as a convergence result if we refine the mesh \emph{around} the embedded curve $g$.\\

We know that $\hnorm{u_h}\lesssim \ltwonorms{\tilde f}$ since $\femspace$ is
Lipschitz-conforming cf. theorem \ref{thm:H3Lip}, so $\{u_h\}_{h>0}$ is a
bounded sequence. Using the definition of consistency in
\cite{stummel1979generalized,wang2013minimal} we say that $\femspace$ is a
consistent approximation of $H^3(\Omega)^d$ by theorem \ref{thm:consistency} and
\cite[Section 3.1]{wang2013minimal} so we can extract a subsequence
$\{u_{h_k}\}_{k\geq 0}$ of $\{u_h\}_{h>0}$ and a function $\mathsf{w}' \in
H^3(\Omega)^d$ such that $\partial^\bfalpha u_{h_k}\rightarrow \partial^\bfalpha
\mathsf{w}'$ weakly in $L^2(\Omega)$ for all $|\bfalpha| \leq m$. We know by
corollary \ref{cor:approx} that given any $\vsf \in H^3(\Omega)^d$ there exists
a sequence $\{\vsf_h\}_{h>0}$ in $\femspace$ such that $\lim_{h\rightarrow 0}\hnorm{\vsf-\vsf_h}=
0$. Then we can say:
\begin{align*}
|a_{h_k}(u_{h_k}, \vsf_{h_k}) - a(\mathsf{w}',
\vsf)| & \leq |a_{h_k}(u_{h_k}, \vsf_{h_k}-\vsf)|
+ |a_{h_k}(u_{h_k}-\mathsf{w}', \vsf)|\\
&\leq \hnorm{u_{h_k}}\hnorm{\vsf_{h_k}-\vsf} +  |a_{h_k}(u_{h_k}-\mathsf{w}', \vsf)|
\end{align*}
which vanishes as $k\rightarrow\infty$. Next we show:
\begin{equation}\label{conv_sourceterms}
\int_{S^1} \tilde f \cdot \vsf_{h_k} \circ g \dth \rightarrow \int_{S^1}
\tilde f \cdot\vsf \circ g \dth
\quad \text{ as }\quad h_k\rightarrow 0.
\end{equation}
By the dominated convergence theorem this requires at least
pointwise convergence of $(\vsf_{h_k}-\vsf)\circ g$ and pointwise boundedness
by some integrable function. The latter is trivial since $\vsf_{h_k}$ and $\vsf$
are continuous over a bounded domain. The information we have is strong convergence of $\|\vsf_{h_k}-\vsf\|_{m,\Omegah}$ by the weak approximation property, so by
lemma \ref{lemma:technical} \eqref{conv_sourceterms} is verified. In light of this we must have $a(\mathsf{w}', \vsf) = \int_{S^1}\tilde f \cdot \vsf \circ g \dth$ and so corollary \ref{cor:wp1} implies $\mathsf{w}'\equiv
u$.\\

We now show strong convergence. Let $\{\wsf\}_{h>0}$ be a sequence in $\femspace$ such that $\hnorm{u-\wsf} \rightarrow 0$ as $h \rightarrow 0$. Then,
\begin{align*}
\hnorm{u-u_h}^2 & \lesssim\hnorm{u-\wsf}^2 + a_h(\wsf - u_h, \wsf-
u_h)\\
 & \lesssim \hnorm{u-\wsf}^2 + a(u, u) - 2a_h(u, u_h)
+ a_h(u_h, u_h)\\
& \lesssim \hnorm{u-\wsf}^2 + a(u, u) - 2a_h(u, u_h)\\
& + \int_{S^1} \tilde f \cdot u_h \circ g \dth.
\end{align*}
Now using what we just derived above:
\begin{align*}
& a(u, u) - 2a_h(u, u_h)
+ \int_{S^1} \tilde f \cdot u_h \circ g \dth\\
\rightarrow \quad &
a(u, u) - 2a_h(u, u)
+ \int_{S^1} \tilde f \cdot u \circ g \dth = 0
\end{align*}
as $h\rightarrow 0$.
\end{proof}

\begin{remark}[Comparison with $L^2(\Omega)$ source terms]
When studying elliptic equations it is common for the source term to be defined in terms of some $f\in L^2(\Omega)^d$ on the form $\langle f, \vsf_{h_k}\rangle_{0,\Omega}$. Weak
$L^2(\Omega)^d$ convergence therefore trivially completes the argument in this
case since:
\[
\langle f, \vsf_{h_k}\rangle_{0,\Omega} \rightarrow
\langle f, \vsf\rangle_{0,\Omega},
\]
is by definition, weakly convergent. This argument is ill-suited to the setting
above due to the singular nature of the $S^1$ integral as a source term. This is
the reason for the use of lemma \ref{lemma:technical} in corollary
\ref{cor:uconvergence}.
\end{remark}

The key thing to note in the result above is that the implicit conforming shape-regular
family of triangulations
$\{\Omegah\}_{h_j}$ is such that for any $h_j\rightarrow 0$ as
$j\rightarrow\infty$ there is a subset of $\mathring{\mathcal{E}}_{th_j}$
tracing out $g\circ S^1$.\\

We proceed to prove a convergence rate. To this end we introduce the notion
of \emph{conforming relatives} first proposed in \cite{brenner1996two}.

\begin{proposition}[Conforming relatives]
There exists an $H^m$-conforming finite element space $V_h^{c,d}\subset
H_0^m(\Omega)^d$ and an operator $\CR:\femspace \rightarrow V_h^{c,d}$ such
that $\forall v\in\femspace$:
\begin{equation}\label{relatives}
\sum_{j=0}^{m-1} h^{2(j - m)} |v_h^i - \big[\CR v_h \big]^i|_{j,\Omegah}^2 +
|\big[\CR v_h \big]^i|_{m,\Omegah}^2 \lesssim |v_h^i|_{m,\Omegah}^2.
\end{equation}
for each component $v_h^i$ of $v_h$.
\end{proposition}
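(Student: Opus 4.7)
The plan is to build $V_h^{c,d}$ componentwise as an $H^m$-conforming finite element space with homogeneous boundary conditions (so $V_h^{c,d}\subset H_0^m(\Omega)^d$), and construct $\CR$ by a local averaging of degrees of freedom, following the macroelement-based strategy of \cite{brenner1996two}. Since $m=3$, this means choosing for each component a $\mathsf{C}^2$-conforming scalar space $V_h^c$ on $\Omegah$ whose local polynomial degree is large enough to interpolate the nodal data at the vertices used by $\tilde V_h$ (vertex values and gradients), with additional DoFs supplying second derivatives and higher-order edge moments; a high-order $\mathsf{C}^2$ element of Ženíšek/Morgan--Scott type on a Clough--Tocher-style refinement suffices. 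The componentwise action then yields $V_h^{c,d}$.

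Next, I would define $\CR$ on each scalar component $v_h^i$ as follows: at each internal vertex $a$ of $\Omegah$, set the value and gradient of $[\CR v_h]^i(a)$ equal to the (already single-valued) vertex DoFs of $v_h^i$ from the underlying nonconforming element; for the second-derivative and edge-moment DoFs of $V_h^c$ (which are not carried by $\tilde V_h$) set them by local averaging of restrictions of $v_h^i$ from adjacent triangles, picking any convenient averaging rule (e.g.\ mean of the patch). On boundary vertices and edges, set the corresponding DoFs to zero, which is consistent with the vanishing conditions in the definition of $\tilde V_h$ and guarantees $V_h^{c,d}\subset H_0^m(\Omega)^d$.

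To establish \eqref{relatives}, I would work on a single triangle $K$ via a reference-element Bramble--Hilbert argument. The crucial observation is that if $v_h|_{\omega_K}$ happens to be globally $\mathsf{C}^2$ on a patch $\omega_K$ around $K$, then by construction all averaged DoFs coincide with the unique local values, so $\CR v_h = v_h$ on $K$; hence $(v_h^i-[\CR v_h]^i)|_K$ depends only on the jumps of $v_h^i$ and its derivatives of order $\leq m-1$ across edges in $\omega_K$. On the reference patch one then bounds the finite-dimensional quantity $\sum_{j<m} h^{2(j-m)}|v_h^i-[\CR v_h]^i|_{j,K}^2$ by these jump seminorms, and a standard scaling argument together with the weak continuity of lemma \ref{lemma:wkcts} (which in turn is controlled by the local $m$-seminorm via a trace/inverse estimate) yields the desired bound by $|v_h^i|_{m,\omega_K}^2$. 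Summing over $K$ and using the finite overlap of the patches gives the global estimate; the bound on $|[\CR v_h]^i|_{m,\Omegah}$ then follows from the triangle inequality.

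The main obstacle is the construction of a conforming space rich enough to accommodate all vertex data of $\tilde V_h$ while still being $\mathsf{C}^2$, together with a clean averaging rule for the \emph{extra} DoFs of $V_h^c$ that yields the correct $h^{m-j}$ scaling in \eqref{relatives}. Verifying that the equivalence class of $\CR$ really kills smooth (conforming) inputs is routine once the DoF-matching and averaging are set up, but choosing the macroelement decomposition so that the Bramble--Hilbert step is applicable requires care; as noted, this is exactly the construction carried out in \cite{brenner1996two}, which can be adapted here with only notational changes because weak continuity of $\tilde V_h$ holds up to second-order moments on edges.
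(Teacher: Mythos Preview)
The paper does not give its own proof of this proposition; it simply defers to \cite{hu2017canonical} and \cite[Lemma~3.2]{hu2014new}, with a footnote observing that the hypotheses needed there (for arbitrary $m\geq 1$) are precisely the weak continuity properties of $\tilde V_h$ supplied by \cite{wu2019nonconforming}. Your sketch is faithful to the construction those references carry out (which in turn descends from \cite{brenner1996two}): a sufficiently rich $\mathsf{C}^{m-1}$-conforming space, an averaging operator on the extra degrees of freedom, and a scaling/Bramble--Hilbert argument in which $v_h-\CR v_h$ is controlled by interelement jumps that vanish (in the moment sense) by lemma~\ref{lemma:wkcts}. So your proposal is consistent with what the paper cites; there is nothing to compare against in the paper itself.
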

This result can be seen in e.g.  \cite{hu2017canonical} or \cite[Lemma
3.2]{hu2014new}\footnote{The reference given here shows under which conditions
the conforming relatives operator exists, for arbitrary $m\geq 1$. These are
exactly the weak continuity properties provided by \cite{wu2019nonconforming}.}.
The proof we provide next uses these operators as well as the hypoellipticity
result in lemma \ref{lemma:local_regularity}.

\begin{corollary}\label{cor:strang}
The convergence rate for the expression $\lim_{h\rightarrow
0}\hnorm{u-\uth} = 0$ is almost linear in $h$ via the following bound:
\begin{align*}
\hnorm{u-\uth} & \lesssim h\Big(\ltwonorms{\tilde f} + \sum_{i=1}^2 \ltwonormh{D^3u^i}
+ \ltwonormh{D^5u^i}
+ \ltwonormh{D^4u^i}\\
&+h \big(\sum_{e\in g \circ S^1} \ltwoe{\jump{D^3 u^i}} + 
\ltwoe{\jump{D^5 u^i}}\big)  + 2 \ltwonormh{D^4u^i}\Big)\\
& + \sum_{e\in g \circ S^1}\ltwoe{\jump{D^3 u^i
-P_{\mathcal{K}_e}\big[D^3 u^i\big]}}\\
& +h^s |u|_{m+s,\Omegah},
\end{align*}
for any integer $s\geq 0$, where $P_{\mathcal{K}_e}$ is the projection operator
on $L^2(\mathcal{K}_e)$ defined by:
\begin{align*}
P_{\mathcal{K}_e}v = (\sum_{K\in \mathcal{K}_e}|K|)\inv \sum_{K\in
\mathcal{K}_e}\int_K v \dx,\qquad v\in L^2(\Omega).
\end{align*}
\end{corollary}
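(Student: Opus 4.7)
}
The plan is to apply Strang's lemma (Lemma \ref{strang}) and then bound each term on the right-hand side separately. The approximation term $\inf_{\vsf_h\in\femspace}\|u-\vsf_h\|_{m,\Omegah}$ is handled directly by Corollary \ref{cor:approx} applied element-wise: by the hypoellipticity result (Lemma \ref{lemma:local_regularity}), $u$ is smooth on $\oin$ and $\oout$ separately, so the local interpolation estimate applies on every $K\in\Omegah$, yielding the contribution $h^s|u|_{m+s,\Omegah}$. All subsequent work is devoted to the consistency term.

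To bound the consistency term, I would follow the now-standard \emph{conforming relatives} technique. Writing $v_h = \CR v_h + (v_h - \CR v_h)$ with $\CR v_h \in H^m_0(\Omega)^d$, we can invoke (\ref{Ham:cont}) to cancel the contribution from $\CR v_h$ exactly, leaving
\[
\int_{S^1}\tilde f\cdot v_h\circ g\dth - a_h(u,v_h) = \int_{S^1}\tilde f\cdot(v_h-\CR v_h)\circ g\dth - a_h(u,v_h-\CR v_h).
\]
The first piece is estimated via a trace inequality onto the mesh-aligned curve $g\circ S^1$ (using $v_h - \CR v_h$ continuous across edges of $\meshskelint$ not lying on $g\circ S^1$, and piecewise-smooth in general), combined with the relatives estimate (\ref{relatives}) at order $j=1$, which delivers the desired factor of $h$ multiplying $\ltwonorms{\tilde f}$.

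For the second piece, $a_h(u, v_h-\CR v_h)$, I would integrate by parts element by element. Since $B=(\idop-b\Delta)^3$ is a sixth-order operator and hypoellipticity gives $Bu\equiv 0$ pointwise on each $K\in\Omegah$ (as $K$ never straddles $g\circ S^1$), the bulk integrals vanish and only boundary contributions on $\partial K$ involving $D^3 u$, $D^4 u$, $D^5 u$ tested against traces of $v_h-\CR v_h$ and its first/second normal derivatives remain. Summing over $K$, I split the resulting edge integrals into $e\in g\circ S^1$ and $e\in\meshskelint\setminus(g\circ S^1)$. On the latter, the derivatives of $u$ are single-valued, so the integrand is a jump of $v_h-\CR v_h$'s derivatives; Lemma \ref{lemma:wkcts} provides that integral moments of these derivatives across interior edges vanish, which means I can freely subtract the local average $P_{\mathcal{K}_e}[D^3 u^i]$ from $D^3 u^i$ before pairing, producing the oscillation-type term $\ltwoe{\jump{D^3 u^i - P_{\mathcal{K}_e}[D^3 u^i]}}$ displayed in the statement. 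On the curve $e\in g\circ S^1$, the jumps of $D^3 u, D^4 u, D^5 u$ are genuinely nonzero (this is the singular behaviour induced by the source), and by using the relatives estimate (\ref{relatives}) at the appropriate $j$ together with a scaled trace inequality on each edge, one picks up the factor $h^2$ multiplying $\sum_{e\in g\circ S^1}\ltwoe{\jump{\cdot}}$.

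The main obstacle I expect is bookkeeping rather than conceptual: the sixth-order operator $B$ produces a proliferation of boundary terms under repeated integration by parts, and each one must be matched to the correct weak-continuity property of $\femspace$ (Lemma \ref{lemma:wkcts} only controls orders $1$ and $2$) and to the correct order of the relatives estimate (\ref{relatives}) to extract the stated power of $h$. In particular, the asymmetry in the final bound---where the projection-oscillation term $\ltwoe{\jump{D^3 u^i-P_{\mathcal{K}_e}[D^3 u^i]}}$ carries no $h$ prefactor while the raw jump terms on the curve carry $h^2$---reflects exactly this split between edges where weak continuity is usable versus edges across which $u$ itself is singular, and getting this bookkeeping right is the heart of the argument.
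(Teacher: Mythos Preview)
Your overall architecture is right---Strang plus conforming relatives plus element-wise integration by parts---but you have misidentified the origin of the oscillation term $\sum_{e\in g\circ S^1}\ltwoe{\jump{D^3 u^i - P_{\mathcal{K}_e}[D^3 u^i]}}$, and this reflects a genuine gap. On the off-curve edges $e\in\meshskelng$, the derivatives $D^j u$ are single-valued, so there is no $\jump{D^3 u^i}$ to speak of; what you actually face there are pairings of the form $\int_e D^j u^i\cdot \jump{D^k(\wsf^i-\CRi)}\ds$, and weak continuity of $\wsf$ (Lemma~\ref{lemma:wkcts}) lets you subtract a constant from $D^j u^i$, after which a Poincar\'e-on-$K$ estimate together with the scaled trace inequality produces a clean factor of $h$ times $\ltwonormh{D^{j+1}u^i}$. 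These terms are \emph{not} the source of the residual oscillation term in the statement.

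The term without an $h$ prefactor comes from the curve itself, specifically from the pairing $\int_e \jump{D^3 u^i}\,\avg{D^2(\wsf^i-\CRi)}\ds$ that arises when you split $\jump{D^3 u^i\, D^2(\wsf^i-\CRi)}$ into jump-average and average-jump pieces. Here you have an \emph{average}, not a jump, of the second derivative of the test-function difference, so weak continuity is useless; and to control $\ltwoe{\avg{D^2(\wsf^i-\CRi)}}$ via a trace inequality you are forced up to $\|\wsf^i-\CRi\|_{3,K}$, which the relatives estimate~\eqref{relatives} bounds only by $\hnorm{\wsf}$ with no gain in $h$. That is precisely why the paper inserts $P_{\mathcal{K}_e}$ into the jump of $D^3 u^i$ on the curve edges and leaves the resulting term in the final bound without an $h$ factor. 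Your claim that all curve-edge terms pick up $h^2$ is therefore too optimistic for this particular pairing, and your attribution of the oscillation term to the off-curve edges inverts the actual mechanism.
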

\begin{proof}
For arbitrary $\wsf\in\femspace$ we seek to bound the following expression in terms of $h$:
\begin{align*}
a_h(u-u_h, \wsf) & = a_h(u, \wsf) - \int_{S^1}
\tilde f\cdot \wsf\circ g\dth.
\end{align*}
We use the conforming relatives operator to say:
\begin{align*}
a_h(u-u_h, \wsf) & = a_h(u, \wsf) - \int_{S^1} \tilde f\cdot \wsf\circ g\dth\\
 & = a_h(u, \wsf-\CR\wsf) - \int_{S^1} \tilde f\cdot
 (\wsf-\CR\wsf)\circ g\dth,
\end{align*}
since $a_h(u,\CR\wsf) - \int_{S^1}\tilde f\cdot\CR\wsf\circ g\dth=0$.\\

First we estimate $\int_{S^1} \tilde f\cdot(\wsf-\CR\wsf)\circ g \dth$ and show
linear convergence in $h$. The key tools are the Sobolev embedding theorem and
\eqref{relatives}:
\begin{align*}
\int_{S^1}\tilde f\cdot(\wsf-\CR\wsf)\circ g \dth
& \leq \ltwonorms{\tilde f} \ltwonorms{(\wsf-\CR\wsf)\circ g}\\
& \lesssim \ltwonorms{\tilde f} \linftysd{(\wsf-\CR\wsf)\circ g}\\
& = \ltwonorms{\tilde f} \|\wsf-\CR\wsf\|_{\infty,  g\circ S^1}\\
& \lesssim \ltwonorms{\tilde f} \|\wsf-\CR\wsf\|_{1,  g\circ
S^1}\;\textnormal{(embedding)}\\
& \lesssim \ltwonorms{\tilde f} \|\wsf-\CR\wsf\|_{2, \Omegah}
\end{align*}
Now using the bound in \eqref{relatives}:
\begin{equation}\label{eq:ptermc4}
\int_{S^1} \tilde f\cdot(\wsf-\CR\wsf)\circ g \dth\lesssim h\ltwonorms{\tilde f} \hnorm{\wsf}.
\end{equation} 

Second we estimate $a_h(u, \wsf-\CR\wsf)$ by writing the variational
problem for $w$ in a strong form. For the sake of exposition we
recall the standard integration by parts identities on elements $K$ of
$\Omegah$ in our notation:
\begin{align*}
 \int_{K} \langle D u^i, D \wsf^i \rangle \diff x
&= - \int_{K} \langle D^2 u^i, \wsf^i\rangle  \diff x
 +  \intpk \langle D u^i, \eta_{K} \wsf^i\rangle  \diff s,\\
\int_{K} \langle D^2 u^i, D^2 \wsf^i\rangle  \diff x
 &= \int_{K} \langle D^4 u^i, \wsf^i\rangle  \diff x
 +  \intpk \langle \eta_{K}D^2 u^i, D\wsf^i\rangle  \diff s\\
 & -  \intpk  \langle D^3  u^i, \eta_{K} \wsf^i\rangle \diff s,\\
\int_{K} \langle D^3 u^i, D^3 \wsf^i \rangle\diff x
 &= - \int_{K} \langle D^6 u^i, \wsf^i \rangle\diff x
 +  \intpk \langle D^5 u^i,\eta_{K} \wsf^i \rangle\diff s\\
 &-  \intpk \langle \eta_{K} D^4 u^i, D\wsf^i \rangle \diff s
  +  \intpk \langle D^3 u^i, \eta_{K} D^2\wsf^i\rangle  \diff s,
\end{align*}

Recall that $\oinh$ (resp. $\oouth$) denotes the parts of domain $\Omegah$
restricted to the inside (resp. outside) of the curve traced by $g \circ S^1$.
\begin{remark}[Strong form equation]
The strong form version of \eqref{Ham:cont} can be written by using the
identities above:
\begin{align*}
& B u = 0, \quad \textnormal{on}\quad  \oinh \textnormal{ and }
\oouth,\\
& \int_{S^1} \tilde f\cdot v\circ g \dth = \sum_{i=1}^d
\sum_{e\in\gam} \intep  b_2\binom{m}{2}\langle \jump{D^3 u^i}, v^i\rangle  - b_3\binom{m}{3} \langle\jump{D^5 u^i
},v^i\rangle\nonumber \\ &\qquad\qquad\qquad +
 b_3\binom{m}{3}\langle\jump{D^4 u^i}, D v^i\rangle - b_3\binom{m}{3}\langle \jump{D^3
 u^i},D^2 v^i\rangle \diff s,\\
 &\qquad\qquad\qquad  \forall  v\in H^3(\Omega).
\end{align*}
\end{remark}

Indeed, for a test function $v \in H^3(\Omega)^d$ we have:
\begin{align*}
 & \sum_{i=1}^d\int_{\Omegah} \sum_{j=0}^m b_j\binom{m}{j} \langle D^j u^i,
 D^j v^i\rangle  \diff x\\
=& \sum_{i=1}^d \int_{\oinh}  \sum_{j=0}^m b_j\binom{m}{j}\langle D^j u^i, D^j v^i\rangle \diff
x+\int_{\oouth}  \sum_{j=0}^m b_j\binom{m}{j}\langle D^j u^i, D^j v^i\rangle \diff
x.
\end{align*}
So integrating by parts:
\begin{align*}
 & \sum_{i=1}^d\int_{\Omegah} \sum_{j=0}^m b_j\binom{m}{j} \langle D^j u^i,
 D^j v^i\rangle  \diff x\\
=& \sum_{i=1}^d\int_{\oinh} Bu^i\cdot  v^i \diff x
+\sum_{i=1}^d\int_{\oouth} Bu^i\cdot  v^i \diff x\\
& + \int_{\circ S^1}b_2 \binom{m}{2}\langle \jump{D^3 u^i}, v^i\rangle - b_3\binom{m}{3}\langle\jump{D^5 u^i
},v^i\rangle\\
& \qquad\quad  +  b_3\binom{m}{3}\jump{D^4 u^i
D v^i} - b_3\binom{m}{3} \jump{D^3 u^i  D^2 v^i} \diff s\,.
\end{align*}
Since the integral over $\oinh$ and $\oouth$ of $Bu^i\cdot v^i$ vanishes we can relate the singular source term to a jump condition on $g\circ S^1$ via the
following variational equality:
\begin{align}\label{eq:vareq}
\int_{S^1} \tilde f \cdot v\circ  \dth & = \sum_{i=1}^d
\sum_{e\in\gam} \intep b_2 \binom{m}{2}\langle \jump{D^3 u^i}, v^i\rangle  - b_3\binom{m}{3}\langle\jump{D^5 u^i
},v^i\rangle  \nonumber \\ &\qquad\qquad\qquad +  b_3\binom{m}{3}\langle\jump{D^4 u^i}, D v^i\rangle \nonumber \\ &\qquad\qquad\qquad - b_3\binom{m}{3}\langle \jump{D^3
 u^i},D^2 v^i\rangle \diff s,\quad \forall
 v\in H^3(\Omega)\,.
\end{align}
Then:\\
\begin{align*}
a_h(u, \wsf-\CR\wsf) & = \sum_{i=1}^2 \sumks 
\underbrace{\int_{K}B u^i\cdot (\wsf^i - \CRi) \diff x}_{\textnormal{sum
cancels by definition}}\\
&  +  \underbrace{\intpk b_1\binom{m}{1}\langle D u^i, \eta_{K} (\wsf^i - \CRi)\rangle \diff
 s}_{\textnormal{sum cancels because integrand is $H^1$}}\\
& +  \intpk b_2\binom{m}{2}\langle \eta_{K}D^2 u^i, D(\wsf^i - \CRi)\rangle
\diff s\\
&  -  \intpk b_2\binom{m}{2} \langle D^3 u^i, \eta_{K} (\wsf^i - \CRi)\rangle \diff s\\
& -  \intpk b_3\binom{m}{3}\langle D^5 u^i, \eta_{K} (\wsf^i - \CRi)\rangle
\diff s\\
& +  \intpk b_3\binom{m}{3}\langle \eta_{K}D^4 u^i, D(\wsf^i - \CRi)\rangle \diff s\\
 &-  \intpk b_3\binom{m}{3}\langle D^3 u^i, \eta_{K} D^2(\wsf^i - \CRi)\rangle \diff s.
\end{align*}

We now write the boundary terms as jumps on the mesh skeleton. The expressions
$\wsf^i - \CRi$ and $D^2 u^i$ have unique traces everywhere, while the
expressions $D^ju^i$, $j=3,4,5$ only have unique traces on
$\meshskelng$. Lastly, the expressions $D^j(\wsf^i - \CRi)$, $j=1,2$ are not
uniquely defined anywhere on $\meshskelint$. We can therefore write:\\
\begin{subequations}\label{thatlong}
\begin{align}
& a_h(u, \wsf-\CR\wsf) \nonumber \\
& = \sum_{i=1}^2 
\int_{\meshskelint} b_2\binom{m}{2} \langle D^2 u^i, \jump{D(\wsf^i -
\CRi)}\rangle \ds\label{term1}\\
& + \int_{\meshskelng} b_3\binom{m}{3}\langle D^4 u^i, \jump{D(\wsf^i -
\CRi)}\rangle\label{term2}\\
& \quad\quad\qquad - b_3\binom{m}{3} \langle D^3 u^i, \jump{D^2(\wsf^i - \CRi)}
\rangle\ds\label{term3}\\
& + \int_{\circ S^1} - b_2 \binom{m}{2}\langle \jump{D^3 u^i}, \wsf^i - \CRi\rangle\label{term4}\\
& \,\;\qquad\qquad - b_3 \binom{m}{3}\langle \jump{D^5 u^i}, \wsf^i - \CRi\rangle\label{term5}\\
& \,\;\qquad\qquad + b_3\binom{m}{3} \jump{D^4 u^i D(\wsf^i - \CRi)}\label{term6}\\
& \,\;\qquad\qquad- b_3 \binom{m}{3}\jump{D^3 u^i D^2(\wsf^i - \CRi)}\ds,\label{term7}
\end{align}
\end{subequations}

where we recall that $\jump{D^4 u^i D(\wsf^i - \CRi)}$ and
$\jump{ D^3 u^i D^2 (\wsf^i - \CRi)}$ on $e = \Kep\cap\Kem$ are defined as:
\begin{align*}
 \jump{D^4 u^i D (\wsf^i - \CRi)} &= \langle \eta_\Kep D^4 u^i|_\Kep ,  D
(\wsf^i - \CRi)|_\Kep \rangle\\
&  + \langle \eta_{\Kem}D^4 u^i|_{\Kem},  D (\wsf^i - \CRi)|_{\Kem}\rangle,\\
 \jump{ D^3 u^i D^2 (\wsf^i - \CRi)}  &= \langle \eta_\Kep  \cdot D^3
u^i|_\Kep ,  D^2 (\wsf^i - \CRi)|_\Kep \rangle\\
&+ \langle \eta_{\Kem}\cdot D^3 u^i|_{\Kem},  D^2 (\wsf^i - \CRi)|_{\Kem}\rangle.
\end{align*}

Many of the terms in \eqref{thatlong} are estimated using the same technique and
rely on the weak continuity result for $\femspace$ in lemma \ref{lemma:wkcts}.
First we highlight the main inequalities on which we rely. Let $e$ be an edge of
an arbitrary $K\in\Omegah$ with length $|e|$. Further, let $\mathcal{K}_e$
denote the set of triangles in $\Omegah$ having $e$ as an edge. We
recall \cite[Equations 10.3.8, 10.3.9]{brennerscott} and the trace theorem:
\begin{subequations}\label{terms17}
\begin{align}
|e|\inv\ltwoe{f}^2 & \lesssim h_K^{-2}\ltwoK{f}^2
+|f|_{1,K}^2,\quad f\in H^1(K)\label{terms17a}\\
|e|\ltwoe{\jump{w}}^2 & \lesssim h_K^2
\sum_{K\in\mathcal{K}_e}\honeK{w}^2,\quad w|_K\in H^1(K),\label{terms17b}\\
 \ltwoe{f|_K}^2 & \lesssim \honeK{f}^2,\quad\qquad\qquad f|_K\in H^1(K).
\end{align}
\end{subequations}


We now estimate each term in \eqref{thatlong} starting with \eqref{term1}. We
now drop the binomial coefficients to ease the notation.\\

For any constant $c_e$ and using Cauchy-Schwartz:
\begin{align*}
& \sum_{i=1}^2\int_{\meshskelint} b_2 \langle D^2 u^i, \jump{D(\wsf^i -
\CRi)}\rangle \ds\\
= & \sum_{i=1}^2\int_{\meshskelint} b_2 \langle D^2 u^i-c_e, \jump{D(\wsf^i -
\CRi)}\rangle \ds\\
 \leq & b_2 \sum_{i=1}^2\sum_{e\in\meshskelint} \ltwoe{D^2 u^i-c_e}
 \ltwoe{\jump{D(\wsf^i - \CRi)}}.
\end{align*}
Now we multiply by $1=|e|^{-1/2}|e|^{1/2}$:
\begin{align*}
& \sum_{i=1}^2\int_{\meshskelint} b_2 \langle D^2 u^i, \jump{D(\wsf^i -
\CRi)}\rangle \ds\\
=&  b_2 \sum_{i=1}^2\sum_{e\in\meshskelint} |e|^{-1/2}\ltwoe{D^2 u^i-c_e} |e|^{1/2}\ltwoe{\jump{D(\wsf^i - \CRi)}}\\
\leq & b_2 \big(\sum_{i=1}^2 \sum_{e\in\meshskelint} |e|^{-1}\ltwoe{D^2
u^i-c_e}^2\big)^{1/2}\big(|e|\ltwoe{\jump{D(\wsf^i -
\CRi)}}^2\big)^{1/2}.
\end{align*}
Now using \eqref{terms17}:
\begin{align*}
& \sum_{i=1}^2\int_{\meshskelint} b_2 \langle D^2 u^i, \jump{D(\wsf^i -
\CRi)}\rangle \ds\\ \lesssim & b_2 
\big(\sum_{i=1}^2 \sum_{e\in\meshskelint} \min_{K\in\mathcal{K}_e} \big[ h_K^{-2}
\ltwoK{D^2u^i - c_e}^2 + |D^2u^i|_{1,K}^2 \big]\big)^{1/2}\\
&\times\big(h_K^2 |D(\wsf^i - \CRi)|_{1,K}^2\big)^{1/2}.
\end{align*}
Now since $c_e$ was arbitrary we can take it to be the constant in $\mathbb R$
that is the closest approximation of $D^2u^i$. Projection of Sobolev
functions onto constants is well-understood; by e.g. \cite[Proposition
1.135]{ErnGuermond2013} we know that:
\[
\ltwoK{D^2u^i - |K|\inv\int_K D^2u^i\dx} \lesssim h_K |D^2u^i|_{1,K},
\]
with a constant independent of $h_K$. Squaring on both sides cancels the
dependence on $h_K$ in the previous estimate so we get:
\begin{align*}
& \sum_{i=1}^2\int_{\meshskelint} b_2  \min_{K\in\mathcal{K}_e}\langle D^2 u^i, \jump{D(\wsf^i -
\CRi)}\rangle \ds\\ \lesssim & b_2 
\big(\sum_{i=1}^2 \sum_{e\in\meshskelint} \min_{K\in\mathcal{K}_e}|D^2u^i|_{1,K}^2\big)^{1/2}
\big(h_K^2 |D(\wsf^i - \CRi)|_{1,K}^2\big)^{1/2}\\
\lesssim & b_2 h \ltwonormh{D^3u^i} \hnorm{\wsf^i - \CRi}\\
\lesssim & b_2 h \ltwonormh{D^3u^i} \hnorm{\wsf},
\end{align*}
using \eqref{relatives} in the last step.\\

Next we estimate \eqref{term2}. It is clear that the only difference from
\eqref{term1} here is the operator $D^4$ instead of $D^2$ applied to $u^i$,
so by the same steps as previously:
\begin{align*}
& \sum_{i=1}^2 \int_{e\in\meshskelng} b_3\langle D^4 u^i, \jump{D(\wsf^i -
\CRi)}\rangle \ds\\
&\lesssim b_3 h \ltwonormh{D^5u^i} \hnorm{\wsf}.
\end{align*}

We now treat \eqref{term3}. For an arbitrary constant $c_e$:
\begin{align*}
& \sum_{i=1}^2 \int_{\meshskelng} b_3\langle D^3 u^i, \jump{D^2(\wsf^i
- \CRi)} \rangle\ds\\
& = \sum_{i=1}^2 \int_{\meshskelng} b_3\langle D^3 u^i - c_e, \jump{D^2(\wsf^i - \CRi)} \rangle\ds\\
& \leq b_3 \big(\sum_{i=1}^2 \sum_{e\in\meshskelng} \ltwoe{D^3 u^i -
c_e}^2\big)^{1/2}\big(\ltwoe{\jump{D^2(\wsf^i - \CRi)}}^2\big)^{1/2}.
\end{align*}
Now using \eqref{terms17} again and Cauchy-Scwartz:
\begin{align*}
& \sum_{i=1}^2 \int_{\meshskelng} b_3\langle D^3 u^i, \jump{D^2(\wsf^i
- \CRi)} \rangle\ds\\
& \leq b_3 \big(\sum_{i=1}^2 \sum_{e\in\meshskelng}
 \min_{K\in\mathcal{K}_e} h_K^{-2}  \ltwoK{D^3 u^i -c_e}^2
 + |D^3u^i|_{1,K}^2 \big)^{1/2}\\
 & \quad \times \big(h_K^2 |D^2(\wsf^i - \CRi)|_{1,K}^2
 \big)^{1/2}\\
& \lesssim b_3 \big(\sum_{i=1}^2 \sum_{e\in\meshskelng} |D^3u^i|_{1,K}^2
\big)^{1/2}\big(h_K^2 |D^2(\wsf^i - \CRi)|_{1,K}^2\big)^{1/2}.
\end{align*}
Using the trace theorem we get:
\begin{align*}
& |\sum_{i=1}^2 \int_{\meshskelng} b_3\langle D^3 u^i, \jump{D^2(\wsf^i
- \CRi)} \rangle\ds|\\
& \lesssim b_3 h \big(\sum_{i=1}^2 \sumks |D^3u^i|_{1,K}^2 \big)^{1/2}
\hnorm{\wsf^i - \CRi}\\
& = b_3 h \ltwonormh{D^4u^i} \hnorm{\wsf^i - \CRi}\\
& \lesssim b_3 h \ltwonormh{D^4u^i} \hnorm{\wsf}.
\end{align*}
Estimating \eqref{term4}, \eqref{term5} and \eqref{term6} relies on the trace
theorem and a property of the conforming relatives operator in
\eqref{relatives}. For \eqref{term4} we have:
\begin{align*}
& \sum_{i=1}^2 \int_{g \circ S^1} - b_2 \langle \jump{D^3 u^i}, \wsf^i -
\CRi\rangle\ds\\ 
\leq & b_2 \sum_{i=1}^2 \sum_{e\in g\circ S^1} 
\ltwoe{\jump{D^3 u^i}}\ltwoe{\wsf^i - \CRi}.
\end{align*}
By the trace theorem we have $\ltwoe{\wsf^i - \CRi}\lesssim
\sum_{K\in\mathcal{K}_e} \honeK{\wsf^i - \CRi}$. Using this and
\eqref{relatives} in the above estimate gives us:
\begin{align*}
& |\sum_{i=1}^2 \int_{g\circ S^1} - b_2 \langle \jump{D^3 u^i}, \wsf^i -
\CRi\rangle\ds |\\ 
\lesssim & b_2 \sum_{i=1}^2 \sum_{e\in g\circ S^1} 
\ltwoe{\jump{D^3 u^i}} \|\wsf^i - \CRi\|_{1, \Omegah}\\
\lesssim & b_2 h^2 \sum_{i=1}^2 \sum_{e\in g\circ S^1} 
\ltwoe{\jump{D^3 u^i}}\hnorm{\wsf}.
\end{align*}
By a similar argument for \eqref{term5}:
\begin{align*}
& |\sum_{i=1}^2 \int_{g\circ S^1} - b_3 \langle \jump{D^5 u^i}, \wsf^i -
\CRi\rangle\ds|\\ 
\lesssim & b_3 \sum_{i=1}^2 \sum_{e\in g\circ S^1} 
\ltwoe{\jump{D^5 u^i}} \|\wsf^i - \CRi\|_{1, \Omegah}\\
\lesssim & b_2 h^2 \sum_{i=1}^2 \sum_{e\in g\circ S^1} 
\ltwoe{\jump{D^5 u^i}}\hnorm{\wsf}.
\end{align*}
Term \eqref{term6} is estimated similarly:
\begin{align*}
& \sum_{i=1}^2\int_{g \circ S^1} b_3 \jump{D^4 u^i D(\wsf^i - \CRi)}\ds\\
= & b_3 \sum_{i=1}^2\int_{g \circ S^1}
\langle \eta_\Kep D^4 u^i|_\Kep ,  D(\wsf^i - \CRi)|_\Kep \rangle\\
& \qquad\quad\quad\;\;+ \langle \eta_\Kem D^4 u^i|_\Kem ,  D (\wsf^i - \CRi)|_\Kem \rangle
\ds.
\end{align*}
Using Cauchy-Schwartz twice, the trace theorem and \eqref{relatives} we proceed
as previously:
\begin{align*}
&  \sum_{i=1}^2\int_{g \circ S^1} b_3 \jump{D^4 u^i D(\wsf^i -
\CRi)}\ds\\
\leq & b_3 \sum_{i=1}^2\sum_{e\in g \circ S^1}
\ltwoe{\eta_\Kep D^4 u^i|_\Kep }\ltwoe{D(\wsf^i - \CRi)|_\Kep }\\
& \qquad\quad\quad\;\; +\ltwoe{\eta_\Kem D^4 u^i|_\Kem }\ltwoe{D (\wsf^i - \CRi)|_\Kem }\\
\lesssim & b_3\sum_{i=1}^2 \sum_{e\in g \circ S^1}
\big(\ltwoe{\eta_\Kep D^4 u^i|_\Kep }
+\ltwoe{\eta_\Kem D^4 u^i|_\Kem}\big)
\|D(\wsf^i - \CRi)\|_{1,\Omegah}\\
\lesssim & b_3 h \Big( \sum_{i=1}^2\sum_{e\in g \circ S^1}
\big(\ltwoe{\eta_\Kep D^4 u^i|_\Kep }
+\ltwoe{\eta_\Kem D^4 u^i|_\Kem}\big)\Big)
\|\wsf\|_{m,\Omegah}\\
\leq & b_3 h \ltwonormh{D^4 u} \hnorm{\wsf}.
\end{align*} 
Finally we estimate \eqref{term7}:
\begin{align*}
& |\sum_{i=1}^2\int_{g \circ S^1} - b_3 \jump{D^3 u^i D^2(\wsf^i -
\CRi)}\ds |\\
= & |-b_3 \sum_{i=1}^2\int_{g \circ S^1}\avg{D^3 u^i}\cdot\jump{D^2(\wsf^i - \CRi)} + \jump{D^3 u^i}\avg{D^2(\wsf^i - \CRi)}\ds|\\
= & T_1 + T_2.
\end{align*}
We estimate $T_1$ using the same method as for the term \eqref{term3}. The only
difference is the presence of the averaging operator, but we can write $\avg{D^3
u^i} = \half D^3u^i|_\Kem + \half D^3u^i|_\Kep$, and since the
restrictions of $D^3u^i$ to $\Kep$ and $\Kem$ are smooth there are no
changes to the procedure for the term \eqref{term3}.
\begin{align*}
T_1 \lesssim b_3 h \ltwonormh{D^4u} \hnorm{\wsf}.
\end{align*}
Obtaining a convergence rate for $T_2$ is more difficult owing to the presence
of the $\avg{D^2(\wsf^i - \CRi)}$ term. It is easy to see why, since for an
arbitrary edge $e\in\meshskelint$, $\ltwoe{D^2(\wsf^i -\CRi)}\lesssim
\honeK{D^2(\wsf^i -\CRi)}$ for some $K\in\Omegah$ with $e$ as an edge. Then by
\eqref{relatives} we know:
\begin{align*}
(\sum_{i=1}^2\sumks \honeK{D^2(\wsf^i -\CRi)}^2)^{1/2}\lesssim\hnorm{\wsf},
\end{align*}
but we do not recover a factor of $h$ in this bound. We can however bound
\eqref{term7} as follows using an averaging operator over a neighbourhood near
an edge (see also \cite[Equation 3.12]{wu2019nonconforming} for an example). We
define $P_{\mathcal{K}_e}$ as the projection operator on $L^2(\mathcal{K}_e)$
defined by:
\begin{align*}
P_{\mathcal{K}_e}v = (\sum_{K\in \mathcal{K}_e}|K|)\inv \sum_{K\in
\mathcal{K}_e}\int_K v \dx,\qquad v\in L^2(\Omega).
\end{align*}
Then,
\begin{align*}
T_2 & = |b_3 \sum_{i=1}^2\int_{g \circ S^1} \jump{D^3 u^i}\avg{D^2(\wsf^i - \CRi)}\ds|\\
& = |b_3 \sum_{i=1}^2\int_{g\circ S^1} \jump{D^3 u^i
-P_{\mathcal{K}_e}\big[D^3 u^i\big]}\avg{D^2(\wsf^i - \CRi)}\ds|\\
& \leq b_3 \sum_{i=1}^2\sum_{e\in g\circ S^1}\ltwoe{\jump{D^3 u^i
-P_{\mathcal{K}_e}\big[D^3 u^i\big]}}\ltwoe{\avg{D^2(\wsf^i - \CRi)}}.
\end{align*}
Now using the trace theorem and \eqref{relatives}:
\begin{align*}
T_2 & \lesssim b_3 \sum_{i=1}^2\sum_{e\in g\circ S^1}\ltwoe{\jump{D^3 u^i
-P_{\mathcal{K}_e}\big[D^3 u^i\big]}} \hnorm{\wsf^i - \CRi}\\
& \lesssim b_3 \sum_{i=1}^2\sum_{e\in g\circ S^1}\ltwoe{\jump{D^3 u^i
-P_{\mathcal{K}_e}\big[D^3 u^i\big]}} \hnorm{\wsf}.
\end{align*}
Collecting the bounds we have for \eqref{thatlong} so far and dividing by
$\hnorm{\wsf}$ yields:
\begin{align*}
\frac{a_h(u - u_h, \wsf)}{\hnorm{\wsf}} \lesssim & h\Big(\sum_{i=1}^2
  \ltwonormh{D^3u^i}
+ \ltwonormh{D^5u^i}
+ \ltwonormh{D^4u^i}\\
&+h \sum_{e\in g\circ S^1}\ltwoe{\jump{D^3 u^i}}\\
& + h
\sum_{e\in g\circ S^1}\ltwoe{\jump{D^5 u^i}}  + 2 \ltwonormh{D^4u^i}\Big)\\
& + \sum_{e\in g\circ S^1}\ltwoe{\jump{D^3 u^i
-P_{\mathcal{K}_e}\big[D^3 u^i\big]}}
\end{align*}

Using Strang's lemma \ref{strang} and corollary \ref{cor:approx} we can
therefore say:
\begin{align*}
\hnorm{u-u_h}& \lesssim h\Big(\ltwonorms{\tilde f}+\sum_{i=1}^2 \ltwonormh{D^3u^i}
+ \ltwonormh{D^5u^i}
+ \ltwonormh{D^4u^i}\\
&+h \sum_{e\in g\circ S^1}\ltwoe{\jump{D^3 u^i}} \\
& + h
\sum_{e\in g\circ S^1}\ltwoe{\jump{D^5 u^i}}  + 2 \ltwonormh{D^4u^i}\Big)\\
& + \sum_{e\in g\circ S^1}\ltwoe{\jump{D^3 u^i
-P_{\mathcal{K}_e}\big[D^3 u^i\big]}}\\
& +h^s |u|_{m+s,\Omegah},
\end{align*}
for any $s\geq 0$, so the result follows.
\end{proof}

\begin{remark}[Conforming reconstruction operator]
In the analysis above we make heavy use of the projection operator
$P_{\mathcal{K}_e}$ as it is constructed in such a way as to be continuous over the
edge $e$ (trivially so as the range of this operator is the space of constants).
\cite{georgoulis2017analysis} proposes a $\mathsf{C}^1$ reconstruction operator
with attractive properties (see e.g. equation (16) in the reference) that could
be explored instead. These properties concern error estimation of the projection
error on elements $K$ of the triangulation. However, convergence in terms of $h$
is not readily obtained using such operators as the preceding analysis deals
with terms of like $\sum_{e\in g\circ S^1}\ltwoe{\jump{D^3 u^i
-P_{\mathcal{K}_e}\big[D^3 u^i\big]}}$ concerning the jump over edges. By
using a trace inequality to pull the estimate to $K$ we lose an order of
differentiation which is reflected in the estimation properties of the
aforementioned operator (roughly speaking: the $L^2$ norm on $\partial K$ can be
bounded by the $H^1$ norm on $K$). It is therefore not certain that such a
conforming reconstruction operator yields estimates that scale well with $h$.
\end{remark}

In summary, the finite element discretisation in \eqref{Ham:disc} is convergent for $\tilde f \in L^2(S^1)^d$ and mesh-aligned curve $g\in\mathsf{C}^0(S^1)^d$.
We saw that the presence of the singular curve integral as a source term prohibits global regularity, and that despite recovering almost everywhere $\mathsf{C}^\infty$ regularity we cannot attain linear convergence as a function of $h$ using the conforming relatives used in  corollary \ref{cor:strang}.

\section{Summary}\label{conclusion}

We have presented error analysis for a higher-order nonconforming finite element method with singular data. In doing so we have confirmed a standard result: nonconforming finite elements are well-suited, in terms of proving convergence results for the discretisation, to problems with smooth data. The lack of at least global $L^2$ regularity of the source terms considered here prohibits the discovery of convergence rate across the singularity. We therefore emphasise the need for higher-order conforming finite element methods as this would greatly simplify the analysis carried out in section \ref{sec:trilap}.


\addcontentsline{toc}{section}{References}

\printbibliography

\end{document}